\documentclass[12pt]{amsart} \usepackage{amssymb} \usepackage{mathabx} \usepackage{bm}


\newcommand{\wrlab}[1]{\label{#1}}

\newtheorem{thm}{THEOREM}[section]

\newtheorem{lem}[thm]{LEMMA} 
\newtheorem{cor}[thm]{COROLLARY} 
 
 \newtheorem{thm*}{THEOREM}[]
 \newtheorem{ex}[thm]{EXAMPLE}

\newcommand{\tref}[1]{Theorem~\ref{#1}}
\newcommand{\cref}[1]{Corollary~\ref{#1}}

\newcommand{\lref}[1]{Lemma~\ref{#1}}



\def\N{{\mathbb N}}  
\def\R{{\mathbb R}} \def\C{{\mathbb C}}

  
 \def\scrf{{\mathcal F}} 
\def\scri{{\mathcal I}}

\def\scru{{\mathcal U}} \def\scrv{{\mathcal V}} \def\scrw{{\mathcal W}}
  
  \def\scrc{{\mathcal C}}


\def\bfa{{\mathbf A}} \def\bfb{{\mathbf B}} \def\bfc{{\mathbf C}}
 \def\bfg{{\mathbf G}}

\def\bfs{{\mathbf S}}  \def\bfu{{\mathbf U}}
\def\bfv{{\mathbf V}} \def\bfw{{\mathbf W}} \def\bfx{{\mathbf X}}
\def\bfy{{\mathbf Y}} \def\bfz{{\mathbf Z}}

  \def\bflitf{{\mathbf f}}

\def\bfalpha{{\bm\alpha}}


\font\tenolde=eufm10 at 10pt
\font\sevenolde=eufm7
\font\fiveolde=eufm5
\newfam\oldefam
\textfont\oldefam=\tenolde
\scriptfont\oldefam=\sevenolde
\scriptscriptfont\oldefam=\fiveolde

  \def\Lg{{\mathfrak g}}






\def\spec{\hbox{spec\,}}
\def\Sm{\hbox{Sm\,}}
\def\Irr{\hbox{Irr\,}}
\def\dom{\hbox{dom\,}}
\def\im{\hbox{im\,}}
\def\dim{\hbox{dim\,}}

\def\Stab{\hbox{Stab}}
\def\GL{\hbox{GL}}
\def\Ad{\hbox{Ad}}


\def\ds{\displaystyle}

\begin{document}
\bibliographystyle{alpha}


\title[Discrete free-Abelian central stabilizers]
{Discrete free-Abelian central stabilizers in a higher order frame bundle}

\keywords{prolongation, moving frame, dynamics}
\subjclass{57Sxx, 58A05, 58A20, 53A55}

\author{Scot Adams}
\address{School of Mathematics\\ University of Minnesota\\Minneapolis, MN 55455
\\ adams@math.umn.edu}

\date{June 1, 2017\qquad Printout date: \today}

\begin{abstract}
  Let a real Lie group $G$ have a $C^\infty$ action on a real manifold~$M$.
  Assume every nontrivial element of $G$   has nowhere dense fixpoint set in $M$.
  First, we show, in every frame bundle, except possibly the $0$th,
  that each stabilizer admits no nontrivial compact subgroups.
  Second, we show that, if $G$ is connected,
  then there is a~dense open $G$-invariant subset of some
  higher order frame bundle of~$M$ such that,
  for any point $x$ in that subset, the stabilizer in $G$ of~$x$
  is a discrete, finitely-generated, free-Abelian, central subgroup of~$G$.
  We derive several corollaries of these two results.
\end{abstract}

\maketitle

 

\section{Introduction\wrlab{sect-intro}}

P.~Olver's freeness conjecture (in his words) asserts: ``If a Lie
group acts effectively on a manifold, then, for some $n<\infty$, the
action is free on [a nonempty] open subset of the jet bundle of order $n$.''
There is some ambiguity in this: No mention is made of connectedness
of the group or manifold, the particular choice of jet bundle is not made precise,
and the smoothness of the action is left unspecified.
Here we focus on~higher order frame bundles and $C^\infty$ actions.
The manifold on~which the group acts is not assumed connected.
Some of our results require the Lie group to be connected, but others do not.
To accommodate the $C^\infty$ hypothesis (instead of $C^\omega$),
we~assume that each nontrivial group element has nowhere dense fixpoint set,
a stronger condition than effectiveness.
Under these assumptions, from earlier work, we have:
\begin{itemize}
\item[(1)] In a dense open subset of~a higher order frame bundle,
           stabilizers are discrete (Theorem 6.4 of \cite{olver:movfrmsing}, Theorem 3.3 of \cite{adams:genfreeacts}).
\item[(2)] The conjecture is not true if the Lie group is connected and has~noncompact center
           (\cite{adams:cinftyctx} and \cite{adams:comegactx}).
\item[(3)] The conjecture is true generically (\cite{adams:genfreeacts} and \cite{adolv:generic}).
\item[(4)] The conjecture is true for any smoothly real algebraic action;
           in fact, for any smoothly real algebraic action,
           there exists a~nonempty open invariant subset of a higher order frame bundle
           on which the prolonged action is free and proper (\cite{adams:algacts}).
\end{itemize}

By (2), the freeness conjecture is not true in complete generality.
However, our main result, \tref{thm-freeAb-cent-stabs}, shows that, if the Lie group is connected,
then we do get significant control over the stabilizers in~prolongations;
specifically,
on a dense open subset of some frame bundle,
every stabilizer is a discrete, finitely-generated, free-Abelian central subgroup
of~the ambient Lie group.
\tref{thm-freeAb-cent-stabs} yields:
\begin{itemize}
\item[(A)] The frame bundle conjecture is true if the ambient Lie group
is~connected and has compact center (\cref{cor-cpt-center}).
\end{itemize}
By (2) and (A), we know for exactly which connected Lie groups
the frame bundle freeness conjecture holds: Those with compact center.

By elementary methods,
in \lref{lem-no-cpt-subgps-in-stabs},
we observe that,
in any~frame bundle except~the $0$th,
every stabilizer has no nontrivial compact subgroups.
As a consequence, we prove the following:
\begin{itemize}
\item[(B)] The frame bundle conjecture is true if the original action, before prolongation,
has compact stabilizers (\cref{cor-cpt-stabs-prolong}).
\item[(C)] Any proper action induces a free proper action on~every frame bundle
except possibly the $0$th (\cref{cor-prolong-proper}).
\item[(D)] If a prolonged action is proper, then it is free (\cref{cor-proper-prolong-strong}).
\end{itemize}

A key result of \cite{adams:algacts} is that, for any real algebraic action with compact stabilizers,
there exists a nonempty open invariant set on which the action is proper.
In~\lref{lem-alg-cpt-stabs-to-proper} below, we improve this result from ``nonempty open'' to~``dense open''.
Then, by \lref{lem-no-cpt-subgps-in-stabs},
we are similarly able to~improve (4) from ``nonempty open'' to ``dense open'':
\begin{itemize}
\item[(E)] For any smoothly real algebraic action, there exists a dense open invariant subset
of a higher order frame bundle on which the prolonged action is free and proper (\tref{thm-mainalg}).
\end{itemize}

Any free proper action has a transversal, a.k.a.~a moving frame.
So, by~(4) and (E), we find moving frames
in sufficiently high prolongations of~any smoothly real algebraic action.
Similarly, by (C), we find moving frames
in all prolongations (except possibly the $0$th) of any proper action.
Some nonproper actions have a~proper prolongation to a frame bundle,
and, by (D), any such prolongation admits a moving frame.

Section 6 is independent of the rest of this paper.
It motivates some of the ideas in Section 7, but is not,
strictly speaking, needed.
Section~7 is also independent of the others.
The main results are in Section~8.
Other useful (but easier) dynamical ideas appear in~Sections 4 and 5.

\section{Global notation and conventions\wrlab{sect-global}}

Let $\N:=\{1,2,3,\ldots\}$ and let $\N_0:=\N\cup\{0\}$.
For all $c\in\N$, let $0_c:=(0,\ldots,0)\in\R^c$.
For any set $S$, let $\#S\in\N_0\cup\{\infty\}$ denote the~number of elements in $S$.
For any function $\psi$,
the domain of $\psi$ will be denoted $\dom[\psi]$
and the image of $\psi$ will be denoted $\im[\psi]$.
For any function $\psi$, for any set $S$,
we define
\begin{eqnarray*}
\psi_*(S)&:=&\{\,\,\psi(s)\,\,|\,\,s\in S\cap(\dom[\psi])\,\,\}\qquad\hbox{and}\\
\psi^*(S)&:=&\{\,\,x\in \dom[\psi]\,\,|\,\,\,\psi(x)\in S\,\,\}.
\end{eqnarray*}
For any sets $X$ and $Y$,
a {\bf partial function from $X$ to $Y$}, denoted $\psi:X\dashrightarrow Y$,
is a function $\psi$ s.t.~ both $\dom[\psi]\subseteq X$
and $\im[\psi]\subseteq Y$.

For any group~$G$, the identity element of $G$ is denoted $1_G$.
For any group~$G$, the center of $G$ is denoted $Z(G)$.
By ``Lie group'', we mean real Lie group, unless otherwise stated.
By ``manifold'', we mean $C^\infty$~real manifold, unless otherwise stated.
By ``action'', we mean left action, unless otherwise stated.
An action of a Lie group $G$ on~a manifold $M$ will be called $C^\omega$
if there is a real analytic manifold structure on~$M$,
compatible with its $C^\infty$~manifold structure,
with respect to~which the $G$-action on $M$ is real analytic.
Any subset of~a topological space acquires, without comment,
its relative topology, inherited from the ambient topological space.
For any group $G$, for any set $X$, for any action of $G$ on $X$, for any~$x\in X$,
we define $\Stab_G(x):=\{g\in G\,|\,gx=x\}$.

Let a group $G$ act on a topological space $X$.
The $G$-action on $X$ is~{\bf fixpoint rare} if,
for all $g\in G\backslash\{1_G\}$,
the set $\{x\in X\,|\,gx=x\}$ has empty interior in $X$,
{\it i.e.}, if no nontrivial element of $G$ fixes
every point in a nonempty open subset of~$X$.
If $G$ is a connected locally compact Hausdorff topological group,
if $X$ is locally compact Hausdorff
and if~the $G$-action on $X$ is continuous,
then, in Lemma 6.1 of~\cite{adams:locfreeacts},
we~observe that the $G$-action on $X$ is fixpoint rare iff,
for every nonempty open $G$-invariant subset~$U$ of $X$,
the $G$-action on $U$ is~effective.
We say the $G$-action on $X$ is {\bf component effective}
if, for all~$g\in G\backslash\{1_G\}$,
for any connected component $X^\circ$ of $X$,
there exists $x\in X^\circ$ such that $gx\ne x$.
If $X$ is locally connected ({\it e.g.}, if $X$ is a manifold),
then each connected component of $X$ is an open subset of~$X$,
and so, in this setting, fixpoint rare implies component effective.
In \lref{lem-real-analytic-component-eff-iff-fixpt-rare} below,
we show: for $C^\omega$ actions,
fixpoint rare is equivalent to component effective.
In \cref{cor-cpt-gp-comp-eff-equal-fxptrare} below,
we show: for a $C^\infty$ action of a compact Lie group,
fixpoint rare is equivalent to component effective.

Let a topological group $G$ act on a set $X$.
We say the $G$-action on~$X$ is {\bf locally free} if,
for every $x\in X$, $\Stab_G(x)$ is discrete.

By an {\bf$\R$-variety} (resp.~$\C$-variety), we mean a
reduced separated scheme of~finite type over $\spec\,\R$ (resp.~$\spec\C$).
Keep in mind that, in~our usage in this paper, a variety may be reducible.

Let $\bfv$ be an $\R$-variety.
The set of~$\R$-points of~$\bfv$ is denoted $\bfv(\R)$.
For any $x\in\bfv(\R)$,
we identify $x:\spec\,\R\to\bfv$ with its image in~$\bfv$;
then $x$ is a Zariski closed point in $\bfv$.
We give to $\bfv(\R)$ the Hausdorff topology induced by
the standard Hausdorff topology on $\R$.
Let $\Gamma$~be the Galois group of $\C$ over $\R$.
Let $\bfv^\C$ be the complexification of~$\bfv$.
Then $\bfv^\C$ is a $\C$-variety
and there is a~canonical action of $\Gamma$ on~$\bfv^\C$.
The points of~$\bfv$ are in~1-1 corrspondence with~$\Gamma$-orbits in~$\bfv^\C$,
and the Zariski closed points of $\bfv$ are in~1-1 correspondence with
$\Gamma$-orbits of~Zariski closed points in~$\bfv^\C$.
Also, the $\R$-points of $\bfv$ are in~1-1 correspondence
with Zariski closed $\Gamma$-fixpoints in $\bfv^\C$.
Let $\Irr(\bfv)$ denote the set of~all irreducible components of $\bfv$.
Then $\Irr(\bfv)$ is in~1-1 correspondence with~the set of orbits
of the $\Gamma$-action on the set of~irreducible components of $\bfv^\C$.
For all $x\in\bfv$, we make the following definitions:
\begin{itemize}
\item the dimension of the Zariski closure of $\{x\}$ in $\bfv$ is denoted $d_x^\bfv$,
\item the set of irreducible components of $\bfv$ passing through $x$
will be~denoted $\Irr_x(\bfv):=\{\bfc\in\Irr(\bfv)\,|\,x\in\bfc\}$,
\item by $\dim_x\bfv:=\max\{(\dim\bfc)-d_x^\bfv\,|\,\bfc\in\Irr_x(\bfv)\}$,
we denote the~Krull dimension of the local ring at $x$ \qquad\quad and
\item the tangent space to $\bfv$ at $x$ will be denoted $T_x\bfv$;
then $T_x\bfv$ is a finite dimensional vector space over $\R$.
\end{itemize}
For all $x\in\bfv$, we have $\dim_x\bfv\le\dim T_x\bfv$.
We define
$$\Sm(\bfv)\quad:=\quad\{\,\,x\in\bfv\,\,|\,\,\dim_x\bfv=\dim T_x\bfv\,\,\}.$$
The set $\Sm(\bfv)$ is Zariski dense and Zariski open in~$\bfv$.
For any $x\in\bfv$, by $x$ is {\bf smooth in $\bfv$}, we mean: $x\in\Sm(\bfv)$,
{\it i.e.}, $\dim_x\bfv=\dim T_x\bfv$.
We say $\bfv$ is {\bf smooth} if $\bfv=\Sm(\bfv)$.

Let $M$ be a manifold.
We will denote the tangent bundle of $M$ by~$TM$;
a $C^\infty$ action of a Lie group $G$ on $M$ induces, by differentiation, a~$G$-action on $TM$.
For any $x\in M$, the tangent space to $M$ at~$x$ is~denoted $T_xM$.
For any $k\in\N_0$,
let $\scrf^kM$ denote the $k$th order frame bundle of $M$;
a $C^\infty$ action of a Lie group $G$ on $M$ induces, by~differentiation, a.k.a.~``prolongation'',
a $G$-action on~$\scrf^kM$.

\section{Real analytic actions\wrlab{sect-real-analytic-actions}}

\begin{lem}\wrlab{lem-real-analytic-component-eff-iff-fixpt-rare}
Let a Lie group $G$ have a $C^\omega$ action on a manifold~$M$.
Then the $G$-action on $M$ is component effective iff it is fixpoint rare.
\end{lem}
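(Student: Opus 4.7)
The plan is to dispatch the easy direction by quoting the parenthetical remark already made in the excerpt, and then reduce the harder direction to the identity theorem for real analytic maps.

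First, note that since $M$ is a manifold, it is locally connected, so the excerpt already observes that fixpoint rare implies component effective. For the reverse direction, I will argue by contrapositive: I will assume the $G$-action on $M$ is not fixpoint rare and produce some $g \in G \setminus \{1_G\}$ and a connected component $M^\circ$ of $M$ such that $gx = x$ for all $x \in M^\circ$, contradicting component effectiveness.

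So suppose the action is not fixpoint rare. Fix $g \in G \setminus \{1_G\}$ for which $F_g := \{x \in M \,|\, gx = x\}$ has nonempty interior in $M$, and pick a point $x_0$ in that interior. Let $M^\circ$ be the connected component of $M$ containing $x_0$. By the hypothesis on the action, there is a real analytic structure on $M$, compatible with the $C^\infty$ structure, for which the $G$-action is $C^\omega$; then the map $\varphi : M \to M$ defined by $\varphi(x) = gx$ is a real analytic diffeomorphism of~$M$, and $M^\circ$ is a connected real analytic manifold. The set $F_g$ is the locus where the $C^\omega$ map $\varphi$ agrees with the identity, so in any real analytic chart on $M^\circ$ it is the common zero set of finitely many real analytic functions.

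By choice of $x_0$, the set $F_g \cap M^\circ$ contains a nonempty open subset of $M^\circ$. The identity principle for real analytic functions on the connected real analytic manifold $M^\circ$ then forces each of those coordinate functions to vanish identically on $M^\circ$, so $\varphi$ agrees with the identity on all of~$M^\circ$, i.e., $gx = x$ for every $x \in M^\circ$. Since $g \ne 1_G$, this contradicts component effectiveness, completing the proof.

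The only place where any genuine content enters is the invocation of the real analytic identity theorem on the connected component $M^\circ$; everything else is bookkeeping around the definitions. I do not anticipate a substantive obstacle, provided one is careful that the compatible real analytic structure supplied by the definition of ``$C^\omega$ action'' really does make each orbit map $x \mapsto gx$ real analytic, which is immediate from the definition given in Section~\ref{sect-global}.
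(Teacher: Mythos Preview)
Your proposal is correct and follows essentially the same approach as the paper: both arguments reduce the hard direction to the identity theorem for real analytic maps, showing that if the fixpoint set of some $g$ has nonempty interior then it must contain an entire connected component of $M$. The paper carries this out by proving directly that $\scri(F)$ is clopen via local power series comparisons, whereas you invoke the identity principle on the connected manifold $M^\circ$ as a black box; these are the same content packaged slightly differently.
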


\begin{proof}
Since $M$ is a manifold, $M$ is locally connected, and ``if'' follows.
We wish to prove ``only if''.
Assume the $G$-action on $M$ is component effective.
We wish to show: the $G$-action on $M$ is fixpoint rare.
Let $g\in G$ be given, let a nonempty open subset $U$ of $M$ be given,
and assume, for all $x\in U$, that $gx=x$.
We wish to show that $g=1_G$.

For all $S\subseteq M$, let $\scri(S)$ be the interior of $S$ in $M$
and let $\scrc(S)$ be the closure of $S$ in $M$.
Since $U$ is open in $M$, we have $\scri(U)=U$.
Let $F:=\{x\in M\,|\,gx=x\}$.
Then $U\subseteq F$.
Give to~$M$ a $C^\omega$ manifold structure compatible with its $C^\infty$ manifold structure
s.t.~the $G$-action on $M$ is $C^\omega$.
Define $\phi,\iota\in C^\infty(M,M)$ by~$\phi(x)=gx$ and $\iota(x)=x$.
Let $d:=\dim M$.
For any open ball $B$ in $\R^d$,
for any~two power series converging at every point of $B$,
if they agree on a nonempty open subset of $B$,
then they agree on all of $B$.
So, since $F=\{x\in M\,|\,\phi(x)=\iota(x)\}$,
we see, for any open subset $V$ of $M$,
that: $[V\subseteq F]\,\Rightarrow\,[\scrc(V)\subseteq\scri(F)]$.
Let $V_0:=\scri(F)$.
Then $\scrc(V_0)\subseteq\scri(F)$.
Also, $V_0$ is open in $M$.
Since $\scrc(V_0)\subseteq\scri(F)=V_0\subseteq\scrc(V_0)$,
we get: $\scrc(V_0)=V_0$,
so $V_0$~is closed in~$M$.
As $\emptyset\ne U=\scri(U)\subseteq\scri(F)=V_0$,
we get: $\emptyset\ne V_0$.
So, since $V_0$ is open in $M$ and closed in~$M$,
choose a connected component $C$ of~$M$ such that $C\subseteq V_0$.
Then we have $C\subseteq V_0=\scri(F)\subseteq F$.
Then $\forall x\in C,\,gx=x$.
Then, as the $G$-action on~$M$ is component effective,
we get $g=1_G$.
\end{proof}

\section{Actions of compact Lie groups\wrlab{sect-act-cpt-gp}}

Let a Lie group $K$ have a $C^\infty$ action on a manifold $M$.

\begin{lem}\wrlab{lem-precptelt-ordone-fixpt-fix-component}
Assume $K$ is compact.
Let $g\in K$, let $p\in M$ and assume,
for all $v\in T_pM$, that $gv=v$.
Let $M^\circ$ be the connected component of $M$ satisfying $p\in M^\circ$.
Then, for all $q\in M^\circ$, we have $gq=q$.
\end{lem}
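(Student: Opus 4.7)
The plan is to exploit compactness of $K$ to get a $K$-invariant Riemannian metric, which turns the diffeomorphism $x\mapsto gx$ into an isometry, and then to apply the classical rigidity fact that an isometry of a connected Riemannian manifold is determined by its $1$-jet at a single point.

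First, observe that the hypothesis ``$gv=v$ for all $v\in T_pM$'' presupposes that $g$ acts on $T_pM$, so implicitly $gp=p$. Define $\phi\in C^\infty(M,M)$ by $\phi(x):=gx$. Then $\phi(p)=p$ and $d\phi_p=\mathrm{id}_{T_pM}$. Since $K$ is a compact Lie group, averaging any Riemannian metric on $M$ against Haar measure on $K$ produces a $K$-invariant $C^\infty$ Riemannian metric on $M$; with respect to this metric, $\phi$ is a smooth isometry of $M$. Moreover, $\phi$ is a homeomorphism that fixes $p\in M^\circ$, so it preserves the connected component $M^\circ$, and we may regard $\phi|_{M^\circ}$ as an isometry of the connected Riemannian manifold $M^\circ$.

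Now I invoke the standard rigidity principle: if an isometry of a connected Riemannian manifold fixes a point and has identity differential there, then it is the identity map. The argument is a connectedness sweep. Let $A:=\{q\in M^\circ\,|\,\phi\text{ agrees with }\mathrm{id}\text{ on a neighborhood of }q\}$. Using the $K_p$-equivariance of the Riemannian exponential $\exp_p:T_pM\to M$, which sends the line $t\mapsto tv$ to the geodesic through $p$ with initial velocity $v$, one checks that $\phi\circ\exp_p=\exp_p\circ\,d\phi_p=\exp_p$ on a neighborhood of $0_{T_pM}$, so $\phi$ equals the identity on a normal neighborhood of $p$; in particular $p\in A$, so $A\ne\emptyset$. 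By definition $A$ is open. For closedness in $M^\circ$, suppose $q_n\to q$ with $q_n\in A$; then $\phi(q_n)=q_n$ and $d\phi_{q_n}=\mathrm{id}$, so by continuity $\phi(q)=q$ and $d\phi_q=\mathrm{id}$, whence the exponential argument at $q$ yields $q\in A$. Thus $A$ is clopen in $M^\circ$, hence $A=M^\circ$, and therefore $gq=q$ for every $q\in M^\circ$.

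The main (very mild) obstacle is verifying the rigidity principle in the $C^\infty$ category; but this reduces to the two standard facts used above, namely the existence of a $K$-invariant metric and the equivariance of the exponential map under the isotropy representation. Everything else is a routine open-and-closed argument on $M^\circ$.
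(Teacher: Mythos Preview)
Your proof is correct and follows essentially the same route as the paper's: both fix a $K$-invariant Riemannian metric, use equivariance of the Riemannian exponential to show that the set of points where $g$ acts trivially to first order is open, use continuity of $d\phi$ to show that set is closed, and then conclude by connectedness. The only cosmetic difference is packaging---you phrase it as the isometry rigidity principle and work with the set $A$ where $\phi=\mathrm{id}$ on a neighborhood, while the paper works directly with $F'=\{q:\forall v\in T_qM,\,gv=v\}$; these two sets coincide, and the open/closed arguments are the same.
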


\begin{proof}
Let $F:=\{q\in M\,|\,gq=q\}$.
We wish to show that $M^\circ\subseteq F$.

Let $F':=\{q\in M\,|\,\forall v\in T_qM,\,gv=v\}$.
Then $F'\subseteq F$, so it suffices to show that $M^\circ\subseteq F'$.
By assumption, $p\in F'$.
Then, by definition of~$M^\circ$,
it suffices to show that $F'$ is both open in $M$ and closed in $M$.

Let $S:=\{v\in TM\,|\,gv\ne v\}$.
Let $\pi:TM\to M$ be the tangent bundle map.
As $S$ is open in $TM$ and $\pi:TM\to M$ is an~open map,
$\pi(S)$ is open in $M$.
So, as $F'=M\backslash[\pi(S)]$, $F'$ is closed in $M$.
It remains to show: $F'$ is open in~$M$.
Let $q\in F'$ be given.
We wish to~show: there exists an open neighborhood $U$ of $q$ in~$M$
such that~$U\subseteq F'$.

As $K$~is compact,
fix a $K$-invariant Riemannian metric on $M$.
Let $\varepsilon:TM\dashrightarrow M$ be the Riemannian exponential map.
Choose an open neighborhood $U$ of $q$ in $M$ s.t.~$U\subseteq\varepsilon_*(T_qM)$.
We wish to show:~$U\subseteq F'$.

As $q\in F'$, we see, for all~$v\in T_qM$, that $gv=v$.
By naturality, $\varepsilon:TM\dashrightarrow M$ is $K$-equivariant.
Then, for all $x\in\varepsilon_*(T_qM)$, $gx=x$.
So, since $U\subseteq\varepsilon_*(T_qM)$,
we see, for all $x\in U$, that $gx=x$.
So, since $U$~is open in $M$
and since the $K$-action on $M$ is $C^\infty$,
we see, for all~$x\in U$, for all $v\in T_xM$, that $gv=v$.
That is, $U\subseteq F'$, as desired.
\end{proof}

\begin{cor}\wrlab{cor-cpt-gp-comp-eff-equal-fxptrare}
Assume $K$ is compact.
Then the $K$-action on~$M$ is component effective iff it is fixpoint rare.
\end{cor}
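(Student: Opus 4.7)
The plan is to prove the nontrivial direction by contrapositive: assume the action fails to be fixpoint rare and produce a nontrivial element that fixes an entire connected component. The reverse direction is a general fact about locally connected spaces and requires no compactness.

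First I would dispatch ``fixpoint rare $\Rightarrow$ component effective''. Since $M$ is a manifold, it is locally connected, so every connected component $M^\circ$ of $M$ is open in $M$. If $g\in K\setminus\{1_K\}$ fixed every point of some component $M^\circ$, then $M^\circ$ would be a nonempty open subset of $M$ contained in $\{x\in M\,|\,gx=x\}$, contradicting fixpoint rareness. This step uses nothing about $K$ being compact.

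For the substantive direction, assume the action is component effective and suppose, toward a contradiction, that it is not fixpoint rare. Then some $g\in K\setminus\{1_K\}$ has a nonempty open set $U\subseteq M$ on which $gx=x$ for every $x\in U$. Pick any $p\in U$. The key observation is that since $g$ acts as the identity on the open neighborhood $U$ of $p$, the differential of $x\mapsto gx$ at $p$ is the identity of $T_pM$; hence $gv=v$ for every $v\in T_pM$. This is exactly the hypothesis needed to apply \lref{lem-precptelt-ordone-fixpt-fix-component}, which yields $gq=q$ for every $q$ in the connected component $M^\circ$ containing $p$. But this contradicts component effectiveness of the action at $g$, so the assumption fails and the action is fixpoint rare.

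I do not expect any serious obstacle: the compactness of $K$ enters only through the appeal to \lref{lem-precptelt-ordone-fixpt-fix-component}, which already packages the technical content (availability of a $K$-invariant Riemannian metric and equivariance of the exponential map). The main care point is just to verify the passage from ``$g$ fixes an open set pointwise'' to ``$g$ fixes every tangent vector at an interior point'', which is immediate from the $C^\infty$ hypothesis on the action.
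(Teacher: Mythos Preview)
Your proposal is correct and follows essentially the same route as the paper: the ``if'' direction is dispatched via local connectedness, and the ``only if'' direction passes from ``$g$ fixes an open set'' to ``$g$ fixes all tangent vectors at a point'' via the $C^\infty$ hypothesis, then invokes \lref{lem-precptelt-ordone-fixpt-fix-component} to spread to the whole component. The only cosmetic difference is that the paper phrases the nontrivial direction as a direct implication (if $g$ fixes a nonempty open set then $g=1_K$) rather than by contradiction, but the content is identical.
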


\begin{proof}
Since $M$ is a manifold, $M$ is locally connected, and ``if'' follows.
We wish to prove ``only if''.
Assume the $K$-action on $M$ is component effective.
We wish to show: the $K$-action on $M$ is fixpoint rare.
Let $g\in K$ be given, let a nonempty open subset $U$ of $M$ be given,
and assume, for all $x\in U$, that $gx=x$.
We wish to show that $g=1_K$.

As $U$ is open in $M$ and the $K$-action on $M$ is $C^\infty$
and $\forall x\in U,\,gx=x$,
we get: $\forall x\in U,\,\forall v\in T_xM,\,gv=v$.
As $U\ne\emptyset$, choose $p\in U$.
Then: $\forall v\in T_pM,\,gv=v$.
Let $M^\circ$ be the connected component of $M$ satisfying $p\in M^\circ$.
Then, by~\lref{lem-precptelt-ordone-fixpt-fix-component}, for all $q\in M^\circ$, we have $gq=q$.
So, since the $K$-action on $M$ is component effective, $g=1_K$.
\end{proof}

\section{Stabilizers lack compact subgroups\wrlab{sect-stab-lack-cpt}}

Let a Lie group $G$ have a $C^\infty$ action on a manifold $M$.

\begin{lem}\wrlab{lem-no-cpt-subgps-in-stabs}
Assume the $G$-action on $M$ is component effective.
Let $\ell\in\N$, $x\in\scrf^\ell M$.
Then $\Stab_G(x)$ has no nontrivial compact subgroups.
\end{lem}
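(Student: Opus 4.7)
The plan is to reduce the statement to \lref{lem-precptelt-ordone-fixpt-fix-component} applied to a compact subgroup $K\subseteq\Stab_G(x)$.

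First, I would extract first-order information from the stabilizer condition. Let $p\in M$ be the basepoint of the frame $x\in\scrf^\ell M$. Writing $x=j_0^\ell\phi$ for some germ of local diffeomorphism $\phi:\R^d\dashrightarrow M$ with $\phi(0)=p$, the prolonged action is given by $g\cdot j_0^\ell\phi=j_0^\ell(g\circ\phi)$. Since $\ell\ge1$, equating $\ell$-jets forces at least the $0$-jet and $1$-jet to agree, so $gp=p$ and $dg_p\circ d\phi_0=d\phi_0$. Since $d\phi_0:\R^d\to T_pM$ is a linear isomorphism, this yields $dg_p=\Id$ on $T_pM$. Hence every element of $\Stab_G(x)$ fixes $p$ and acts as the identity on all of $T_pM$.

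Next, let $K\subseteq\Stab_G(x)$ be any compact subgroup; the goal is to conclude $K=\{1_G\}$. Restrict the $G$-action on $M$ to $K$: this is a $C^\infty$ action of a compact Lie group on $M$. By the preceding paragraph, for every $g\in K$ and every $v\in T_pM$, we have $gv=v$. \lref{lem-precptelt-ordone-fixpt-fix-component} now applies and gives, for each $g\in K$, that $g$ acts as the identity on the entire connected component $M^\circ$ of $p$ in $M$. Since the $G$-action on $M$ is component effective, this forces $g=1_G$. Hence $K=\{1_G\}$, as desired.

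The only real subtlety is the first step: identifying the precise first-order data encoded by an order-$\ell$ frame and deducing $dg_p=\Id$. This is where the hypothesis $\ell\ge1$ is essential, since at $\ell=0$ the bundle $\scrf^0M$ is just $M$ and the stabilizer condition yields no information about derivatives. Beyond that, the argument is a clean reduction to \lref{lem-precptelt-ordone-fixpt-fix-component} combined with component effectiveness, with no further analytic input needed.
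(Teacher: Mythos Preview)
Your proof is correct and follows essentially the same approach as the paper: reduce to first-order data to see that every $g\in\Stab_G(x)$ acts trivially on $T_pM$, then invoke \lref{lem-precptelt-ordone-fixpt-fix-component} and component effectiveness. The only difference is cosmetic: where you compute $dg_p=\Id$ directly from the jet description of the prolonged action, the paper instead passes through the canonical map $\tau:\scrf^\ell M\to\scrf^1M$ and cites Lemma~3.1 of \cite{adams:genfreeacts} to obtain $\Stab_G(\tau(x))\subseteq\Stab_G(v)$ for each $v\in T_pM$.
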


\begin{proof}
Let a compact subgroup $K$ of $\Stab_G(x)$ be given.
We wish to~show: $K=\{1_G\}$.
Let $g\in K$ be given.
We wish to show: $g=1_G$.

Let $\pi:\scrf^\ell M\to M$ and $\pi_1:\scrf^1M\to M$ be
the structure maps of~$\scrf^\ell M$ and $\scrf^1M$, respectively.
Let $\tau:\scrf^\ell M\to\scrf^1M$ be the canonical map.
Then $\pi=\pi_1\circ\tau$.
Let $p:=\pi(x)$ and $x_1:=\tau(x)$.
Then $\pi_1(x_1)=p$.
As $\tau:\scrf^\ell M\to\scrf^1M$ is $G$-equivariant,
$\Stab_G(x)\subseteq\Stab_G(x_1)$.
Let $M^\circ$~be the connected component of $M$ satisfying $p\in M^\circ$.
The $G$-action on $M$ is component effective, so it suffices to show:
$\forall q\in M^\circ,\,gq=q$.
Then, by \lref{lem-precptelt-ordone-fixpt-fix-component},
it suffices to show: $\forall v\in T_pM,\,gv=v$.
Let $v\in T_pM$ be given.
We wish to prove that $g\in\Stab_G(v)$.

By Lemma 3.1 of \cite{adams:genfreeacts}
(with $k=1$, with $\sigma$ replaced by $\pi_1$, $x$ by $x_1$, $w$~by~$p$,
and with $W=\scrf^0M$ identified with $M$), $\Stab_G(x_1)\subseteq\Stab_G(v)$.
Then $g\in K\subseteq\Stab_G(x)\subseteq\Stab_G(x_1)\subseteq\Stab_G(v)$, as desired.
\end{proof}

\begin{cor}\wrlab{cor-cpt-stabs-in-subset-prolong}
Assume the $G$-action on $M$ is component effective.
Let~$\ell\in\N$.
Let $V\subseteq\scrf^\ell M$ be $G$-invariant.
Assume, for all $x\in V$, that $\Stab_G(x)$ is compact.
Then the $G$-action on $V$ is free.
\end{cor}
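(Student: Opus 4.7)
The plan is to apply the preceding \lref{lem-no-cpt-subgps-in-stabs} in the most direct way possible: to show freeness of the $G$-action on $V$, I need to verify that $\Stab_G(x) = \{1_G\}$ for every $x \in V$. Fixing such an $x$, the hypothesis gives that $\Stab_G(x)$ is itself a compact subgroup of $\Stab_G(x)$. So I take $K := \Stab_G(x)$ in the conclusion of \lref{lem-no-cpt-subgps-in-stabs}.

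More precisely, first I would observe that the hypotheses of \lref{lem-no-cpt-subgps-in-stabs} are in force: the $G$-action on $M$ is component effective, $\ell \in \N$ is given, and $x \in \scrf^\ell M$ since $V \subseteq \scrf^\ell M$. Therefore \lref{lem-no-cpt-subgps-in-stabs} applies and tells us that $\Stab_G(x)$ has no nontrivial compact subgroups. Next, by the standing assumption on $V$, the group $\Stab_G(x)$ is compact, hence it is a compact subgroup of itself. Combining these two facts yields $\Stab_G(x) = \{1_G\}$. Since $x \in V$ was arbitrary, the $G$-action on $V$ is free.

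There is essentially no obstacle here; the corollary is a one-line deduction from the lemma, which is why I would expect the author to present it as an immediate corollary. The $G$-invariance of $V$ plays no role in the proof itself (it only ensures that the restricted action is well-defined as an action on $V$), and the fact that $\ell \geq 1$ (as opposed to $\ell = 0$) is exactly the hypothesis that lets the lemma apply.
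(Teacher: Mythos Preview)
Your proposal is correct and matches the paper's proof essentially line for line: fix $x\in V$, note that $\Stab_G(x)$ is compact by hypothesis, invoke \lref{lem-no-cpt-subgps-in-stabs} to see $\Stab_G(x)$ has no nontrivial compact subgroups, and conclude $\Stab_G(x)=\{1_G\}$. Your observation that the $G$-invariance of $V$ is needed only to make sense of ``the $G$-action on $V$'' is also accurate.
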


\begin{proof}
Given $x\in V$.
Let $S:=\Stab_G(x)$.
We wish to show: $S=\{1_G\}$.

By assumption, $S$ is compact.
By \lref{lem-no-cpt-subgps-in-stabs},
$S$ has no nontrival compact subgroups.
Then $S$ is trivial, {\it i.e.}, $S=\{1_G\}$, as desired.
\end{proof}

\begin{cor}\wrlab{cor-cpt-stabs-in-prolong}
Assume the $G$-action on $M$ is component effective.
Let~$\ell\in\N$.
Assume, for all $x\in\scrf^\ell M$, that $\Stab_G(x)$ is compact.
Then the $G$-action on $\scrf^\ell M$ is free.
\end{cor}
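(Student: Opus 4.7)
The statement is essentially immediate from the preceding \cref{cor-cpt-stabs-in-subset-prolong}, specialized to the case $V=\scrf^\ell M$, so the plan is simply to invoke that corollary.

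More precisely, I would first observe that $\scrf^\ell M$ is, tautologically, a $G$-invariant subset of itself, since the prolonged $G$-action is defined on all of $\scrf^\ell M$. The compactness hypothesis ``for all $x\in\scrf^\ell M$, $\Stab_G(x)$ is compact'' is exactly the hypothesis ``for all $x\in V$, $\Stab_G(x)$ is compact'' of \cref{cor-cpt-stabs-in-subset-prolong} with $V:=\scrf^\ell M$. The component-effectiveness hypothesis on the $G$-action on $M$ also transfers verbatim.

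Therefore I would apply \cref{cor-cpt-stabs-in-subset-prolong} with $V:=\scrf^\ell M$ to conclude that the $G$-action on $V=\scrf^\ell M$ is free, which is the desired conclusion.

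There is no real obstacle here; the only content of the statement beyond \cref{cor-cpt-stabs-in-subset-prolong} is the trivial observation that one may take $V$ to be the entire frame bundle. The reason for stating this as a separate corollary is presumably expository convenience: in the applications (e.g.~in deducing (B) of the introduction), one typically has a compactness assumption on stabilizers that holds globally on $\scrf^\ell M$ rather than only on an invariant subset.
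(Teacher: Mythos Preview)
Your proposal is correct and matches the paper's own proof exactly: the paper simply writes ``Let $V:=\scrf^\ell M$, and apply \cref{cor-cpt-stabs-in-subset-prolong}.''
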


\begin{proof}
Let $V:=\scrf^\ell M$, and apply \cref{cor-cpt-stabs-in-subset-prolong}.
\end{proof}

\begin{cor}\wrlab{cor-cpt-stabs-prolong}
Assume the $G$-action on $M$ is component effective.
Assume, for all $q\in M$, that $\Stab_G(q)$ is compact.
Let $\ell\in\N$.
Then the $G$-action on $\scrf^\ell M$ is free.
\end{cor}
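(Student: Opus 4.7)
The plan is to reduce the statement to \cref{cor-cpt-stabs-in-prolong}, whose hypothesis asks for compactness of stabilizers at every point of $\scrf^\ell M$ (not merely at points of $M$).

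First, I would recall that the bundle projection $\pi:\scrf^\ell M\to M$ is $G$-equivariant, since the $G$-action on $\scrf^\ell M$ is defined as the prolongation of the $G$-action on $M$. Consequently, for every $x\in\scrf^\ell M$ with $p:=\pi(x)\in M$, the inclusion $\Stab_G(x)\subseteq\Stab_G(p)$ holds: any $g\in G$ fixing $x$ must fix $\pi(x)=p$.

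Second, I would observe that stabilizers are always closed subgroups of the Lie group $G$ (the $G$-action is continuous and points are closed in the Hausdorff space $\scrf^\ell M$). Thus $\Stab_G(x)$ is a closed subgroup of $\Stab_G(p)$, and by hypothesis $\Stab_G(p)$ is compact. A closed subgroup of a compact group is compact, so $\Stab_G(x)$ is compact for every $x\in\scrf^\ell M$.

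Finally, with component effectiveness of the $G$-action on $M$ already assumed, I would invoke \cref{cor-cpt-stabs-in-prolong} directly to conclude that the $G$-action on $\scrf^\ell M$ is free. There is no real obstacle here; the argument is simply the observation that compactness of stabilizers is preserved (in fact inherited) when passing up the $G$-equivariant bundle map $\scrf^\ell M\to M$, together with an appeal to the already-established \cref{cor-cpt-stabs-in-prolong}.
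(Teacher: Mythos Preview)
Your proposal is correct and follows essentially the same approach as the paper: both arguments use $G$-equivariance of the structure map $\pi:\scrf^\ell M\to M$ to obtain $\Stab_G(x)\subseteq\Stab_G(\pi(x))$, note that stabilizers are closed in $G$ and hence compact as closed subsets of the compact group $\Stab_G(\pi(x))$, and then invoke \cref{cor-cpt-stabs-in-prolong}.
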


\begin{proof}
By \cref{cor-cpt-stabs-in-prolong},
it suffices to show, for all $x\in\scrf^\ell M$,
that $\Stab_G(x)$ is compact.
Let $x\in\scrf^\ell M$ be given and let $S:=\Stab_G(x)$.
Then $S$ is closed in $G$, and we wish to show that $S$ is compact.

Let $\pi:\scrf^\ell M\to M$ be the structure map of~$\scrf^\ell M$,
and let $q:=\pi(x)$.
The mapping $\pi:\scrf^\ell M\to M$ is $G$-equivariant,
so $\Stab_G(x)\subseteq\Stab_G(q)$.
Let $C:=\Stab_G(q)$.
By assumption, $C$~is compact.
Also, we have $S=\Stab_G(x)\subseteq\Stab_G(q)=C$.
As $S$ is closed in $G$ and $S\subseteq C\subseteq G$, we get: $S$ is closed in $C$.
So, since $C$~is compact, $S$ is compact.
\end{proof}

\begin{cor}\wrlab{cor-proper-prolong-strong}
Assume the $G$-action on $M$ is component effective.
Let~$\ell\in\N$.
Assume the $G$-action on $\scrf^\ell M$ is proper.
Then the $G$-action on~$\scrf^\ell M$ is free.
\end{cor}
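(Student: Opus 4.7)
The plan is to reduce to \cref{cor-cpt-stabs-in-prolong} by using the standard fact that a proper action has compact stabilizers. First, I would verify, for every $x\in\scrf^\ell M$, that $\Stab_G(x)$ is compact. By definition, properness of the $G$-action on $\scrf^\ell M$ means that the map $G\times\scrf^\ell M\to\scrf^\ell M\times\scrf^\ell M$ sending $(g,y)$ to $(gy,y)$ is a proper map. The preimage of the compact singleton $\{(x,x)\}$ under this map is $\Stab_G(x)\times\{x\}$, which is therefore compact, so $\Stab_G(x)$ is compact.

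Having established compactness of every stabilizer in $\scrf^\ell M$, I would then invoke \cref{cor-cpt-stabs-in-prolong}: its hypotheses (component effectiveness of the $G$-action on $M$, together with compactness of $\Stab_G(x)$ for all $x\in\scrf^\ell M$) are now in place, and it directly yields that the $G$-action on $\scrf^\ell M$ is free, which is the desired conclusion.

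I do not expect any substantive obstacle. The only judgment call is whether to invoke ``proper actions have compact stabilizers'' as a well-known fact or to include the one-line preimage-of-a-point argument; for clarity in a paper that emphasizes elementary methods, I would spell out the one-liner, as above. All the real work is already absorbed into \lref{lem-no-cpt-subgps-in-stabs} and the chain of corollaries \cref{cor-cpt-stabs-in-subset-prolong}--\cref{cor-cpt-stabs-in-prolong}.
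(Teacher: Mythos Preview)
Your proposal is correct and matches the paper's proof essentially verbatim: the paper also notes that properness of the $G$-action on $\scrf^\ell M$ forces every $\Stab_G(x)$ to be compact, and then invokes \cref{cor-cpt-stabs-in-prolong}. The only difference is that the paper states the compact-stabilizer fact without the preimage-of-a-point justification you spell out.
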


\begin{proof}
The $G$-action on $\scrf^\ell M$ is proper,
so, for all $x\in\scrf^\ell M$, $\Stab_G(x)$~is compact.
Then, by \cref{cor-cpt-stabs-in-prolong},
the $G$-action on $\scrf^\ell M$ is free.
\end{proof}

\begin{cor}\wrlab{cor-prolong-proper}
Assume that the $G$-action on $M$ is both proper and component effective.
Let $\ell\in\N$.
Then the $G$-action on~$\scrf^\ell M$ is both proper and free.
\end{cor}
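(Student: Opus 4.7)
The plan is to prove properness first and then invoke \cref{cor-proper-prolong-strong} for freeness. By \cref{cor-proper-prolong-strong}, once we know the $G$-action on $\scrf^\ell M$ is proper, it is automatically free (using the component-effectiveness hypothesis). So the only real task is to promote properness from $M$ to $\scrf^\ell M$.

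To do this, let $\pi:\scrf^\ell M\to M$ denote the structure map of~$\scrf^\ell M$; this is a continuous $G$-equivariant map between locally compact Hausdorff spaces. I would show, in general, that properness is inherited by the total space of any $G$-equivariant continuous map to a space carrying a proper $G$-action. Concretely, given compact sets $K,K'\subseteq\scrf^\ell M$, consider
$$E(K,K')\,\,:=\,\,\{g\in G\,|\,(gK)\cap K'\neq\emptyset\}.$$
If $g\in E(K,K')$, then $g\cdot\pi(K)=\pi(gK)$ meets $\pi(K')$, so $E(K,K')$ is contained in $E_M(\pi(K),\pi(K')):=\{g\in G\,|\,g\cdot\pi(K)\cap\pi(K')\neq\emptyset\}$. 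Since $\pi(K)$ and $\pi(K')$ are compact in $M$ and the $G$-action on $M$ is proper, $E_M(\pi(K),\pi(K'))$ is compact in~$G$. On the other hand, the continuity of the $G$-action on $\scrf^\ell M$ together with Hausdorffness of $\scrf^\ell M\times\scrf^\ell M$ ensures that $E(K,K')$ is closed in $G$. A closed subset of a compact set is compact, so $E(K,K')$ is compact. This is the standard characterization of properness, so the $G$-action on $\scrf^\ell M$ is proper.

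With properness of the prolonged action in hand, \cref{cor-proper-prolong-strong} directly yields that the $G$-action on $\scrf^\ell M$ is free, completing the proof. The only nontrivial step is the properness transfer through~$\pi$, and the main thing to be careful about is that $E(K,K')$ is closed in~$G$ (not just in $E_M(\pi(K),\pi(K'))$), which follows from the continuity of the action map $G\times\scrf^\ell M\to\scrf^\ell M\times\scrf^\ell M$, $(g,x)\mapsto(x,gx)$, and the fact that $K\times K'$ is closed; the rest is just the inclusion $E(K,K')\subseteq E_M(\pi(K),\pi(K'))$ and compactness of a closed subset of a compact set.
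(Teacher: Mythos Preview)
Your proof is correct and follows the same route as the paper: transfer properness from $M$ to $\scrf^\ell M$ via the $G$-equivariant continuous structure map $\pi:\scrf^\ell M\to M$, then invoke \cref{cor-proper-prolong-strong} for freeness. The paper simply asserts the properness transfer as a standard fact, whereas you have spelled out the details; one minor remark is that your closedness argument for $E(K,K')$ implicitly uses that the projection $G\times K\to G$ is closed (since $K$ is compact), which is worth making explicit.
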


\begin{proof}
The structure map $\scrf^\ell M\to M$ is $G$-equivariant and continuous,
so, since $G$-action on $M$ is proper,
it follows that the $G$-action on $\scrf^\ell M$ is proper.
Then, by \cref{cor-proper-prolong-strong},
the $G$-action on $\scrf^\ell M$ is free.
\end{proof}

\section{Some algebraic geometry over $\R$\wrlab{sect-some-ag-over-r}}

For any $\R$-variety $\bfv$, we will denote the Zariski closure of~$\bfv(\R)$ in $\bfv$ by $\widecheck{\bfv}$.
Let $\scrv$ be the category of $\R$-varieties.
Let $\widecheck{\scrv}$ be the full subcategory
$\{\bfv\in\scrv\,|\,\widecheck{\bfv}=\bfv\}$ of $\scrv$.
In this section, we investigate $\widecheck{\scrv}$.

Let $\bfv$ be an $\R$-variety.
For any Zariski locally closed $\bfs\subseteq\bfv$, we give to $\bfs$ its natural $\R$-variety structure;
then the Zariski topology on~$\bfs$ is equal to the relative topology inherited from the Zariski topology on~$\bfv$.
For all~$x\in\Sm(\bfv)$,
the local ring of $\bfv$ at~$x$ is regular;
a regular local ring is an integral domain, so $\#[\Irr_x(\bfv)]=1$.
We conclude that no~smooth point of~$\bfv$ lies on two irreducible components of~$\bfv$.
Let
$$\bfv_1\quad:=\quad\{\,\,x\in\bfv\,\,|\,\,\#[\Irr_x(\bfv)]=1\,\,\}.$$
Then $\ds{\Sm(\bfv)=\bfv_1\cap\left[\bigcup\{\Sm(\bfc)\,|\,\bfc\in\Irr(\bfv)\}\right]}$.
That is, a point of~$\bfv$ is smooth in $\bfv$ iff it both lies on exactly one irreducible component of~$\bfv$
and is a smooth point of that irreducible component.

Let $n\in\N$.
We denote, by $\bfa^n$, affine $n$-space over $\R$;
then 
\begin{itemize}
\item$\bfa^n$ is an irreducible $\R$-variety,
\item$\bfa^n(\R)$ is naturally identified with $\R^n$ \qquad\quad and
\item$\bfa^n(\R)$ is Zariski dense in $\bfa^n$, {\it i.e.}, $\bfa^n\in\widecheck{\scrv}$.
\end{itemize}
We denote, by~$P_n$, the $\R$-algebra of~$\R$-morphisms $\bfa^n\to\bfa^1$.
For any~$S\subseteq\bfa^n$,
we set
$\scri(S):=\{\bflitf\in P_n\,|\,\forall s\in S,\,\bflitf(s)=0\}$.
For all $\bflitf\in P_n$, for all~$x\in\bfa^n$,
the linearization at $x$ of $\bflitf$ is denoted
$d_x\bflitf:\R^n\to\R$.
For any Zariski closed $\R$-subvariety $\bfv$ of $\bfa^n$, for any $x\in\bfv$,
we identify~the tangent space $T_x\bfv$
with $\ds{\bigcap\,\{\,\ker\,(d_x\bflitf)\,|\,\bflitf\in\scri(\bfv)\}}$.

Let $n\in\N$ and let $f\in C^\infty(\R^n,\R)$ and let $x\in\R^n$.
Then the linearization at $x$ of $f$ will be denoted $d_xf:\R^n\to\R$.
For any $\bflitf\in P_n$, if~$f:\R^n\to\R$ is the $\R$-points of~$\bflitf:\bfa^n\to\bfa^1$,
then $d_xf=d_x\bflitf$.

Let $n,d\in\N$.
By an {\bf$(n,d)$-variety}, we mean a
Zariski locally closed $\R$-subvariety~$\bfv$ of $\bfa^n$ such that,
for all $\bfc\in\Irr(\bfv)$, $\dim\bfc<d$.

Let $M$ be a manifold, let $n:=\dim M$, let $S\subseteq M$ and let $x\in S$.
Let $d\in\{0,\ldots,n\}$.
Let $c:=n-d$.
Then an {\bf$(M,S,d)$-chart near $x$} is
a partial function $\phi:M\dashrightarrow\R^n$ such that
\begin{itemize}
\item[]( $\dom[\phi]$ is an open neighborhood of $x$ in $M$ ) \qquad and
\item[]( $\im[\phi]=\R^n$ ) \qquad and \qquad ( $\phi_*(S)=\R^d\times\{0_c\}$ ) \qquad and
\item[]( $\phi:\dom[\phi]\to\R^n$ is a diffeomorphism ).
\end{itemize}
By {\bf$S$ is $d$-dimensional in $M$ near $x$}, we mean
that there exists an~$(M,S,d)$-chart near $x$.
If $S$ is $d$-dimensional in $M$ near $x$,
then, for~any~neighborhood $U$ of $x$ in $M$,
there exists an $(M,S,d)$-chart $\phi$ near~$x$
such that $\dom[\phi]\subseteq U$.

Let $M$ be a manifold, let $n:=\dim M$ and let $d\in\{0,\ldots,n\}$.
Let $S\subseteq M$.
Then {\bf$S$~is a locally closed $d$-submanifold of $M$} means:
both (~$S\ne\emptyset$~) and 
( for all $x\in S$, $S$ is $d$-dimensional in $M$ near $x$ ).

Let $M$ be a manifold, let $n:=\dim M$ and let $S\subseteq M$.
Then, by {\bf$S$ is a locally closed submanifold of $M$}, we mean:
there exists $d\in\{0,\ldots,n\}$ such that $S$ is a locally closed $d$-submanifold of $M$.

Let $M$ be a manifold and let $S$ be a locally closed submanifold of~$M$.
Then $S$ is a locally closed subset of $M$,
{\it i.e.}, $S$ is the intersection of~an~open subset of $M$ and a closed subset of $M$.
We give to~$S$ the unique manifold structure such that the inclusion map $S\to M$ is an immersion.
Then the manifold topology on $S$ agrees with the relative topology on $S$ inherited from $M$.

Let $n,d\in\N$.
Let $V\subseteq\R^n$.
By {\bf$V$ is an $(n,d)$-assembly}, we mean: either ( $V=\emptyset$ ) or ( there exist $k\in\N$
and pairwise-disjoint locally closed submanifolds $V_1,\ldots,V_k$ of $\R^n$
such that both $[\,V=V_1\cup\cdots\cup V_k\,]$ and $[\,\forall j\in\{1,\ldots,k\},\,\dim V_j<d\,]$ ).

\begin{lem}\wrlab{lem-1-Rpts-irred-submfld}
Let $n\in\N$.
Let $\bfs$ be a Zariski locally closed $\R$-subvariety of~$\bfa^n$.
Assume $\bfs$ is irreducible and smooth.
Let $S:=\bfs(\R)$, $d:=\dim\bfs$.
Assume $S\ne\emptyset$.
Then $S$ is a locally closed $d$-submanifold of $\R^n$.
\end{lem}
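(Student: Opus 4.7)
The plan is to construct, for each $x\in S$, an $(\R^n,S,d)$-chart near $x$ by applying the implicit function theorem to suitable polynomial equations vanishing on $\bfs$. First I would reduce to a Zariski closed ambient variety: let $\bfv\subseteq\bfa^n$ be the Zariski closure of $\bfs$. Then $\bfv$ is irreducible of dimension $d$, and there is a Zariski open $\bfu\subseteq\bfa^n$ with $\bfs=\bfv\cap\bfu$ and $S=\bfv(\R)\cap\bfu(\R)$. Since smoothness is a local property and $\bfs$ is open in $\bfv$, the local ring of $\bfv$ at $x$ agrees with that of $\bfs$ at $x$, so $x$ is smooth in $\bfv$; because $\bfv$ is irreducible and $x$ is closed, $\dim_x\bfv=\dim\bfv=d$, giving $\dim T_x\bfv=d$.

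Next I would cut out the submanifold by implicit equations. Set $c:=n-d$. From the identification $T_x\bfv=\bigcap\{\ker(d_x\bflitf)\,|\,\bflitf\in\scri(\bfv)\}$, which has $\R$-codimension $c$ in $\R^n$, select $\bflitf_1,\ldots,\bflitf_c\in\scri(\bfv)$ whose differentials at $x$ are $\R$-linearly independent, and let $F:=(f_1,\ldots,f_c):\R^n\to\R^c$ be their $\R$-points. Because $d_xF$ has rank $c$, the smooth implicit function theorem yields an open neighborhood of $x$ in $\R^n$ and a diffeomorphism $\phi$ of that neighborhood onto $\R^n$ carrying $F^{-1}(0)$ onto $\R^d\times\{0_c\}$.

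The main step, and the chief obstacle, is to show that $F^{-1}(0)=S$ in some smaller open neighborhood $W_0$ of $x$; without this, $\phi$ only straightens the a priori larger zero set of $F$. I would do this by passing to the complexification $\bfv^\C$. Because $\bfv$ is $\R$-irreducible, the Galois group $\Gamma=\Gal(\C/\R)$ acts transitively on $\Irr(\bfv^\C)$; since $x$ is $\Gamma$-fixed and has a regular local ring in $\bfv^\C$ (regularity is preserved by the faithfully flat base change $\R\hookrightarrow\C$), $x$ lies on a unique component of $\bfv^\C$, so transitivity of $\Gamma$ forces $\bfv^\C$ itself to be irreducible and smooth at $x$ of $\C$-dimension $d$. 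Thus $\bfv^\C(\C)$ is, near $x$, a complex $d$-submanifold of $\C^n$. The complex Jacobian of $(\bflitf_1,\ldots,\bflitf_c)$ at $x$ still has rank $c$, so by the holomorphic implicit function theorem the complex zero locus of $(\bflitf_1,\ldots,\bflitf_c)$ is also a complex $d$-submanifold of $\C^n$ near $x$. Two complex $d$-submanifolds of $\C^n$, one contained in the other, must coincide in some open $U\subseteq\C^n$ containing $x$. Intersecting $U$ with $\R^n\cap\bfu(\R)$ gives $W_0$, and then $F^{-1}(0)\cap W_0=\bfv(\R)\cap\bfu(\R)\cap W_0=S\cap W_0$. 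Composing $\phi$ with this local identification, and rescaling so that $\im[\phi]=\R^n$ in the sense of the chart definition, produces the required $(\R^n,S,d)$-chart near $x$. Since $x\in S$ was arbitrary and $S\neq\emptyset$, $S$ is a locally closed $d$-submanifold of $\R^n$.
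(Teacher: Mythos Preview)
Your proof is correct; the overall shape (pass to the closure $\bfv$, pick $c=n-d$ polynomials in $\scri(\bfv)$ with independent differentials at $x$, apply the implicit function theorem) matches the paper, but the crucial step---showing that the zero set $F^{-1}(0)$ agrees with $S$ near $x$---is handled differently.

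The paper stays entirely within real algebraic geometry: it introduces the $\R$-variety $\bfw:=\{\bflitf_1=\cdots=\bflitf_c=0\}$, shows by a dimension count that $\bfv$ is one of its irreducible components, proves $x\in\Sm(\bfw)$ so that no other component of $\bfw$ passes through $x$, and then removes those extra components together with $\bfv\setminus\bfs$ to obtain a Zariski open $\bfu\subseteq\bfa^n$ on which $\bfw$ and $\bfs$ literally coincide as schemes. Your argument instead passes to $\C$: you show $\bfv^\C$ is a complex $d$-submanifold near $x$ (via regularity under base change and the Galois-transitivity trick to get irreducibility), note that the holomorphic zero locus of the $\bflitf_i$ is also a complex $d$-submanifold near $x$, and conclude by the equidimensional-inclusion principle for complex submanifolds; intersecting with $\R^n\cap\bfu(\R)$ then gives the local equality over $\R$.

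Both arguments are sound. The paper's route has the virtue of being purely scheme-theoretic over $\R$, in keeping with the ambient Section~6; yours is more analytic and perhaps quicker to visualize, but imports the holomorphic implicit function theorem and a fact about complex submanifolds that the paper never needs. The Galois argument forcing $\bfv^\C$ to be irreducible is a nice observation, though strictly you only need that the unique component of $\bfv^\C$ through $x$ has $\C$-dimension $d$, which already follows from $\Gamma$-transitivity on components.
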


\begin{proof}
Given $x\in S$,
we wish to show: $S$ is $d$-dimensional in~$\R^n$ near $x$.

Let $\bfv$ be the Zariski closure of $\bfs$ in $\bfa^n$.
Then $x\in\bfs\subseteq\bfv\subseteq\bfa^n$.
Because $\bfs$~is irreducible, $\bfv$ is also irreducible.
Because $\bfs$ is Zariski locally closed in $\bfa^n$,
$\bfs$ is Zariski open in $\bfv$.
It follows both that $\dim\bfs=\dim\bfv$
and that $\dim T_x\bfs=\dim T_x\bfv$.
As $\bfs$ is smooth, we get $\dim T_x\bfs=\dim_x\bfs$.
Since $\bfs$ is irreducible,
we get $\dim_x\bfs=\dim\bfs$.
Then $\dim T_x\bfs=\dim_x\bfs=\dim\bfs=d$.
Then we have both
$$\dim\bfv\,=\,\dim\bfs\,=\,d\qquad\hbox{and}\qquad\dim T_x\bfv\,=\,\dim T_x\bfs\,=\,d.$$
Let $\bfy:=\bfv\backslash\bfs$.
Since $\bfs$ is Zariski open in $\bfv$,
it follows that $\bfy$ is Zariski closed in $\bfv$.
So, since $\bfv$ is Zariski closed in $\bfa^n$,
we see that $\bfy$~is Zariski closed in~$\bfa^n$.
Also, $\bfv\backslash\bfy=\bfs$.
Since $x\in\bfs$, $x\notin\bfy$.
Let $\Lambda$ denote the dual of $\R^n$;
that is, $\Lambda$ is the vector space of linear maps $\R^n\to\R$.
Let $\Lambda_1:=\{d_x\bflitf\,|\,\bflitf\in\scri(\bfv)\}$.
Then $\Lambda_1$ is a vector subspace of $\Lambda$.
We identify $T_x\bfv$ with $\displaystyle{\bigcap\{\ker\,\lambda\,|\,\lambda\in\Lambda_1\}}$.
Then, as $\dim T_x\bfv=d$, we have $\dim\Lambda_1=n-d$.
Let $c:=n-d$ and choose $\bflitf_1,\ldots,\bflitf_c\in\scri(\bfv)$
such that $d_x\bflitf_1,\ldots,d_x\bflitf_c$ are linearly independent in $\Lambda$.
Let $\bfw:=\{p\in\bfa^n\,|\,\bflitf_1(p)=\cdots=\bflitf_c(p)=0\}$.
Then~$\bfw$~is Zariski closed in~$\bfa^n$ and $\bfv\subseteq\bfw$
and $\bflitf_1,\ldots,\bflitf_c\in\scri(\bfw)$.
Let
$$K\quad:=\quad[\ker\,(d_x\bflitf_1)]\,\,\,\cap\,\,\,\cdots\,\,\,\cap\,\,\,[\ker\,(d_x\bflitf_c)].$$
Because $d_x\bflitf_1,\ldots,d_x\bflitf_c$ are linearly independent in $\Lambda$,
$\dim K=n-c=d$.
We identify $T_x\bfw$ with
$\displaystyle{\bigcap\{\ker(d_x\bflitf)\,|\,\bflitf\in\scri(\bfw)\}}$.
As $\bflitf_1,\ldots,\bflitf_c\in\scri(\bfw)$,
we get $T_x\bfw\subseteq K$.
Then $\dim T_x\bfw\le\dim K=d$.
Since $\bfv$ is irreducible and since $\bfv\subseteq\bfw$,
choose $\bfx_1\in\Irr(\bfw)$ such that $\bfv\subseteq\bfx_1$.
Then
$$x\quad\in\quad\bfs\quad\subseteq\quad\bfv\quad\subseteq\quad\bfx_1\quad\subseteq\quad\bfw\quad\subseteq\quad\bfa^n.$$
Since $\bfv$ is Zariski closed in $\bfa^n$ and since $\bfv\subseteq\bfx_1\subseteq\bfa^n$,
we see that $\bfv$ is Zariski closed in $\bfx_1$.
Since $\bfx_1\subseteq\bfw$, $T_x\bfx_1\subseteq T_x\bfw$,
and so $\dim T_x\bfx_1\le\dim T_x\bfw$.
As $\bfx_1$~is irreducible, $\dim_x\bfx_1=\dim\bfx_1$.
Then
\begin{eqnarray*}
\dim\bfx_1&=&\dim_x\bfx_1\,\,\le\,\,\dim T_x\bfx_1\\
&\le&\dim T_x\bfw\,\,\le\,\,d\,\,=\,\,\dim\bfv.
\end{eqnarray*}
So, as $\bfv$ and $\bfx_1$ are irreducible varieties and
as $\bfv$ is Zariski closed in~$\bfx_1$,
we get $\bfv=\bfx_1$.
We have $\bfs\subseteq\bfw$,
so $\dim_x\bfs\le\dim_x\bfw$.
Recall: $d=\dim\bfs$ and $\dim_x\bfs=\dim\bfs$ and $\dim T_x\bfw\le d$.
Then
$$d\,\,\,=\,\,\,\dim\bfs\,\,\,=\,\,\,\dim_x\bfs\,\,\,\le\,\,\,\dim_x\bfw\,\,\,\le\,\,\,\dim T_x\bfw\,\,\,\le\,\,\,d.$$
Then $\dim_x\bfw=\dim T_x\bfw$, so $x\in\Sm(\bfw)$,
so $\#[\Irr_x(\bfw)]=1$.
So, since~$x\in\bfx_1\in\Irr(\bfw)$, we see that $\Irr_x(\bfw)=\{\bfx_1\}$.
We define $k:=\#[\Irr(\bfw)]$
and let $\bfx_2,\ldots,\bfx_k$ be the distinct elements of~$[\Irr(\bfw)]\backslash\{\bfx_1\}$.
Then $\Irr(\bfw)=\{\bfx_1,\ldots,\bfx_k\}$,
and it follows that $\bfw=\bfx_1\cup\cdots\cup\bfx_k$.
For all $j\in\{2,\ldots,k\}$, $\bfx_j\in(\Irr(\bfw))\backslash(\Irr_x(\bfw))$,
so $x\notin\bfx_j$.
Let $\bfz:=\bfx_2\cup\cdots\cup\bfx_k$.
Then $x\notin\bfz$.
Let $\bfu:=\bfa^n\backslash(\bfy\cup\bfz)$.
Since $\bfx_2,\ldots,\bfx_k\in\Irr(\bfw)$,
it follows that $\bfx_2,\ldots,\bfx_k$ are Zariski closed in $\bfw$,
and so $\bfz$ is Zariski closed in $\bfw$.
So, since $\bfw$ is Zariski closed in $\bfa^n$,
we conclude that $\bfz$ is Zariski closed in $\bfa^n$.
So, since $\bfy$~is also Zariski closed in $\bfa^n$,
it follows that $\bfu$ is Zariski open in $\bfa^n$.
So, since $x\notin\bfy$ and $x\notin\bfz$,
we see that $\bfu$ is a Zariski open neighborhood of $x$ in~$\bfa^n$.
Also, we have $\bfu\cap\bfw=\bfw\backslash(\bfy\cup\bfz)$.
Since $\bfs\subseteq\bfw$,
we get $\bfu\cap\bfs\subseteq\bfu\cap\bfw$.
Recall that $\bfv\backslash\bfy=\bfs$.
We have
$$\bfw\backslash\bfz\,\,\,=\,\,\,(\bfx_1\cup\cdots\cup\bfx_k)\backslash(\bfx_2\cup\cdots\cup\bfx_k)\,\,\,\subseteq\,\,\,\bfx_1\,\,\,=\,\,\,\bfv,$$
so $\bfw\backslash(\bfy\cup\bfz)\subseteq\bfv\backslash\bfy$.
Then $\bfu\cap\bfw=\bfw\backslash(\bfy\cup\bfz)\subseteq\bfv\backslash\bfy=\bfs$.
So, since $\bfu\cap\bfw\subseteq\bfu$,
we get $\bfu\cap\bfw\subseteq\bfu\cap\bfs$.
So, since $\bfu\cap\bfs\subseteq\bfu\cap\bfw$,
we get $\bfu\cap\bfs=\bfu\cap\bfw$.
Let $U:=\bfu(\R)$ and let $W:=\bfw(\R)$.
Then $U$ is an open neighborhood of $x$ in $\R^n$
and $U\cap S=U\cap W$.
For all~$j\in\{1,\ldots,c\}$, let $f_j:\R^n\to\R$
be the $\R$-points of $\bflitf_j:\bfa^n\to\bfa^1$;
then $d_xf_j=d_x\bflitf_j$.
Then $d_xf_1,\ldots,d_xf_c$ are linearly independent in $\Lambda$.
So, as $x\in W=\{p\in\R^n\,|\,f_1(p)=\cdots=f_c(p)=0\}$ and $n-c=d$,
we see, by the Implicit Function Theorem, that $W$ is $d$-dimensional near $x$.
Choose an $(\R^n,W,d)$-chart $\phi$ near $x$ such that $\dom[\phi]\subseteq U$.
Because $\phi_*(S)=\phi_*(U\cap S)=\phi_*(U\cap W)=\phi_*(W)$,
we conclude that $\phi$ is an~$(\R^n,S,d)$-chart near~$x$.
Then $S$ is $d$-dimensional in~$\R^n$ near~$x$.
\end{proof}

\begin{lem}\wrlab{lem-2-Rpts-smooth-assembly}
Let $n,d\in\N$.
Let $\bfs$ be a smooth $(n,d)$-variety
and let $S:=\bfs(\R)$.
Then $S$ is an $(n,d)$-assembly.
\end{lem}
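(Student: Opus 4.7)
The plan is to decompose $\bfs$ into its irreducible components, observe that smoothness forces these components to be pairwise disjoint, and then apply \lref{lem-1-Rpts-irred-submfld} to each component separately.

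First I would let $\bfc_1,\ldots,\bfc_k$ denote the distinct elements of $\Irr(\bfs)$. Since $\bfs$ is smooth, $\bfs=\Sm(\bfs)$, and as noted in the discussion preceding \lref{lem-1-Rpts-irred-submfld}, no smooth point of $\bfs$ lies on two irreducible components; therefore $\bfc_1,\ldots,\bfc_k$ are pairwise disjoint. It follows that each $\bfc_j$ is both Zariski closed in $\bfs$ (as an irreducible component) and Zariski open in $\bfs$ (its complement inside $\bfs$ being $\bigcup_{i\ne j}\bfc_i$, a Zariski closed set). Since $\bfs$ is Zariski locally closed in $\bfa^n$, each $\bfc_j$ is Zariski locally closed in $\bfa^n$. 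Being Zariski open in the smooth variety $\bfs$, each $\bfc_j$ is also smooth, and it is irreducible by construction.

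Next, set $d_j:=\dim\bfc_j$. Since $\bfs$ is an $(n,d)$-variety, $d_j<d$ for every $j$. Applying \lref{lem-1-Rpts-irred-submfld} to $\bfc_j$, if $\bfc_j(\R)\ne\emptyset$ then $\bfc_j(\R)$ is a locally closed $d_j$-submanifold of $\R^n$ with $d_j<d$. Because $\bfc_1,\ldots,\bfc_k$ are pairwise disjoint as subsets of $\bfa^n$, the sets $\bfc_1(\R),\ldots,\bfc_k(\R)$ are pairwise disjoint subsets of $\R^n$, and $S=\bfs(\R)=\bfc_1(\R)\cup\cdots\cup\bfc_k(\R)$.

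Finally, let $V_1,\ldots,V_m$ be the nonempty entries of the list $\bfc_1(\R),\ldots,\bfc_k(\R)$. If $m=0$ then $S=\emptyset$ and we are done. Otherwise $V_1,\ldots,V_m$ are pairwise-disjoint locally closed submanifolds of $\R^n$ with $\dim V_j<d$ and union equal to $S$, exhibiting $S$ as an $(n,d)$-assembly. The only nontrivial ingredient is the disjointness of the irreducible components, which is immediate from smoothness; every other step is either a direct application of \lref{lem-1-Rpts-irred-submfld} or a matter of unwinding the definitions, so I anticipate no serious obstacle.
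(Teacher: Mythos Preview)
Your proposal is correct and follows essentially the same approach as the paper's proof: decompose $\bfs$ into its pairwise-disjoint irreducible components (disjointness coming from smoothness), apply \lref{lem-1-Rpts-irred-submfld} to each component, and observe that the resulting submanifolds form an $(n,d)$-assembly. Your version is in fact slightly more explicit than the paper's in verifying that each component satisfies the hypotheses of \lref{lem-1-Rpts-irred-submfld} (Zariski locally closed in $\bfa^n$, irreducible, smooth) and in discarding the empty pieces at the end, but the argument is the same.
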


\begin{proof}
Let $k:=\#[\Irr(\bfs)]$.
Let $\bfs_1,\ldots,\bfs_k$ be the distinct elements of~$\Irr(\bfs)$.
Then, since $\bfs$ is smooth, $\bfs_1,\ldots,\bfs_k$ are pairwise-disjoint.
For all~$j\in\{1,\ldots,k\}$, let~$d_j:=\dim\bfs_j$;
by definition of~$(n,d)$-variety, we get $d_j<d$.
For all $j\in\{1,\ldots,k\}$, let $S_j:=\bfs_j(\R)$;
then, by~\lref{lem-1-Rpts-irred-submfld},
either (~$S_j=\emptyset$ ) or ( $S_j$~is a locally closed closed $d_j$-submanifold of~$\R^n$~).
Since ( $\bfs_1,\ldots,\bfs_k$ are pairwise-disjoint ) and since ( $\bfs=\bfs_1\cup\cdots\cup\bfs_k$~),
we get: ( $S_1,\ldots,S_k$ are pairwise-disjoint ) and (~$S=S_1\cup\cdots\cup S_k$ ).
Then $S$ is an $(n,d)$-assembly, as desired.
\end{proof}

\begin{lem}\wrlab{lem-3-Rpts-variety-assembly}
Let $n,d\in\N$, and let $\bfv$ be an $(n,d)$-variety.
Define $V:=\bfv(\R)$.
Then $V$ is an $(n,d)$-assembly.
\end{lem}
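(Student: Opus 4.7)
The plan is to induct on $d$, reducing the general case to the smooth case handled in \lref{lem-2-Rpts-smooth-assembly}. For the base case $d=1$, every irreducible component of $\bfv$ has dimension $0$, so $\bfv$ is a finite set of closed points of $\bfa^n$; its set of $\R$-points $V$ is then a finite subset of $\R^n$, each singleton of which is a locally closed $0$-submanifold of $\R^n$ of dimension $0<1$, exhibiting $V$ as an $(n,1)$-assembly (or $V=\emptyset$).

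For the inductive step with $d\ge 2$, I would set $\bfs:=\Sm(\bfv)$ and $\bfy:=\bfv\setminus\bfs$. The set $\bfs$ is Zariski open in $\bfv$, hence Zariski locally closed in $\bfa^n$; each of its nonempty irreducible components is a Zariski open dense subset of some irreducible component of $\bfv$, so every irreducible component of $\bfs$ has dimension $<d$. Thus $\bfs$ is a smooth $(n,d)$-variety, and \lref{lem-2-Rpts-smooth-assembly} gives that $S:=\bfs(\R)$ is an $(n,d)$-assembly. The complement $\bfy$ is Zariski closed in $\bfv$, hence Zariski locally closed in $\bfa^n$. If one can show that $\bfy$ is an $(n,d-1)$-variety, then the inductive hypothesis yields that $Y:=\bfy(\R)$ is an $(n,d-1)$-assembly, hence also an $(n,d)$-assembly. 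Since $V=S\cup Y$ is a disjoint union, concatenating the two assembly decompositions then finishes the argument.

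The main obstacle is the dimension-drop claim for $\bfy$. Let $\bfc'$ be an irreducible component of $\bfy$. It is an irreducible Zariski closed subset of $\bfv$, hence contained in some irreducible component $\bfc$ of $\bfv$, where $\dim\bfc\le d-1$. It suffices to show $\bfc'\ne\bfc$, since then irreducibility of $\bfc$ forces $\dim\bfc'<\dim\bfc\le d-1$, so $\dim\bfc'\le d-2<d-1$. To that end, observe that the set
$$W\quad:=\quad\Sm(\bfc)\,\,\setminus\,\,\bigcup\{\bfc''\,|\,\bfc''\in\Irr(\bfv)\setminus\{\bfc\}\}$$
is a nonempty Zariski open subset of $\bfc$, because $\Sm(\bfc)$ is Zariski dense in $\bfc$ and $\bfc$ is not contained in the union of the other (finitely many) irreducible components of $\bfv$. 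Any $x\in W$ lies in exactly one irreducible component of $\bfv$, namely $\bfc$, so $\Irr_x(\bfv)=\{\bfc\}$, giving $x\in\bfv_1$; combined with $x\in\Sm(\bfc)$, this yields $x\in\bfv_1\cap\Sm(\bfc)\subseteq\Sm(\bfv)=\bfs$. Thus $\bfc\cap\bfs\ne\emptyset$, so $\bfc\not\subseteq\bfy$, forcing $\bfc'\subsetneq\bfc$, as required.
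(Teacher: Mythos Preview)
Your proof is correct and follows essentially the same approach as the paper's: induction on $d$, splitting $\bfv$ into its smooth locus $\bfs=\Sm(\bfv)$ and singular locus $\bfy=\bfv\setminus\bfs$, applying \lref{lem-2-Rpts-smooth-assembly} to $\bfs$, and showing $\bfy$ is an $(n,d-1)$-variety by proving each irreducible component of $\bfy$ is strictly contained in some irreducible component of $\bfv$. The only cosmetic difference is in how you verify $\bfc\cap\bfs\ne\emptyset$: you explicitly exhibit a point via the description $\Sm(\bfv)=\bfv_1\cap\bigcup_{\bfc}\Sm(\bfc)$ from \secref{sect-some-ag-over-r}, whereas the paper observes that the Zariski interior of $\bfc$ in $\bfv$ is nonempty and invokes the Zariski density of $\bfs$ in $\bfv$.
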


\begin{proof}
If $d=1$, then,
by Proposition 1.6 (p.~8) of \cite{coste:realalgsets},
$\bfv$ is finite;
in this case, $V$ is a finite subset of $\R^n$,
and so $V$ is an $(n,d)$-assembly.
We assume $d>1$, and argue by induction on $d$.

Let $\bfs:=\Sm(\bfv)$ and $\bfz:=\bfv\backslash\bfs$.
Then $\bfs$ is Zariski open in $\bfv$, and so $\bfz$ is Zariski closed in $\bfv$.
Also, $\bfs$ is Zariski dense in~$\bfv$.
Let $S:=\bfs(\R)$ and $Z:=\bfz(\R)$.
Since $\bfv$ is an $(n,d)$-variety
and since $\bfs$ is Zariski open in~$\bfv$,
we conclude that $\bfs$ is an~$(n,d)$-variety,
so, by \lref{lem-2-Rpts-smooth-assembly},
$S$~is an~$(n,d)$-assembly.
Since $\bfs\cap\bfz=\emptyset$ and $\bfv=\bfs\cup\bfz$,
we see that $S\cap Z=\emptyset$ and $V=S\cup Z$.
It therefore suffices to~show: $Z$~is an $(n,d)$-assembly.
Any $(n,d-1)$-assembly is an $(n,d)$-assembly,
so, it suffices to show: $Z$~is an $(n,d-1)$-assembly.
Then, by~the induction hypothesis, it suffices to show: $\bfz$ is an $(n,d-1)$-variety.

By definition of $(n,d)$-variety, $\bfv$ is Zariski locally closed in $\bfa^n$.
So, as $\bfz$ is Zariski closed in $\bfv$,
we see that $\bfz$~is Zariski locally closed in $\bfa^n$.
It remains to show: each irreducible component of $\bfz$ has $\dim<d-1$.
Let $\bfb\in\Irr(\bfz)$ be given.
We wish to prove: $\dim\bfb<d-1$.

Since $\bfb\in\Irr(\bfz)$, it follows that $\bfb$ is Zariski closed in $\bfz$.
So, since $\bfz$ is Zariski closed in $\bfv$,
we conclude that $\bfb$ is Zariski closed in $\bfv$.
Since $\bfb\subseteq\bfz\subseteq\bfv$ and since $\bfb$ is irreducible,
choose $\bfc\in\Irr(\bfv)$ such that $\bfb\subseteq\bfc$.
Since $\bfb$ is Zariski closed in $\bfv$
and since $\bfb\subseteq\bfc\subseteq\bfv$,
it follows that $\bfb$ is Zariski closed in $\bfc$.
Since $\bfc\in\Irr(\bfv)$,
we see that the Zariski interior of~$\bfc$ in~$\bfv$ is nonempty.
So, since $\bfs$ is Zariski dense in~$\bfv$,
$\bfc\cap\bfs\ne\emptyset$.
Then $\bfc\not\subseteq\bfv\backslash\bfs$.
So, since $\bfb\subseteq\bfz=\bfv\backslash\bfs$,
we get $\bfb\ne\bfc$.
So, since $\bfb$ and $\bfc$ are both irreducible
and $\bfb$ is Zariski closed in $\bfc$,
we get $\dim\bfb\le(\dim\bfc)-1$.
Since $\bfc\in\Irr(\bfv)$
and $\bfv$~is an $(n,d)$-variety,
we see that $\dim\bfc<d$.
Then $\dim\bfb\le(\dim\bfc)-1<d-1$.
\end{proof}

\begin{lem}\wrlab{lem-4-submflds-nwh-dense}
Let $M$ be a manifold, $X$ a locally closed submanifold of $M$.
Assume $\dim X<\dim M$.
Then $X$ is nowhere dense in~$M$.
\end{lem}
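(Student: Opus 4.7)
The plan is to show that the closure $\barX$ of $X$ in $M$ has empty interior, by localizing to $(M,X,d)$-charts. Let $n:=\dim M$, $d:=\dim X$, and $c:=n-d$; by hypothesis, $c\ge1$.

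First I will observe the local picture. Given any $x\in X$, there is an $(M,X,d)$-chart $\phi$ near $x$, so $U_x:=\dom[\phi]$ is an open neighborhood of $x$ in $M$, and $\phi:U_x\to\R^n$ is a diffeomorphism carrying $X\cap U_x$ onto $\R^d\times\{0_c\}$. Since $c\ge1$, the set $\R^d\times\{0_c\}$ is closed in $\R^n$ and has empty interior in~$\R^n$. Transporting back through $\phi$, I obtain: $X\cap U_x$ is closed in $U_x$ and has empty interior in~$U_x$. Because $U_x$ is open in $M$, interior in $U_x$ agrees with interior in $M$ for subsets of $U_x$, so $X\cap U_x$ also has empty interior in $M$.

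Next I argue by contradiction: suppose $X$ is not nowhere dense in~$M$, so the interior of $\barX$ in $M$ contains a nonempty open set $V$. Since $V\subseteq\barX$ and $V$ is open, every nonempty open subset of $V$ meets $X$; in particular $V\cap X\ne\emptyset$, so I can pick some $v\in V\cap X$. Applying the first step at this $v$, I get the chart neighborhood $U_v$ of $v$ with $X\cap U_v$ closed in $U_v$ and empty interior in~$U_v$. Set $V':=V\cap U_v$, which is a nonempty open neighborhood of $v$ in $M$, and in fact $V'\subseteq U_v$.

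Since $V'\subseteq V\subseteq\barX$, every nonempty open subset of $V'$ meets $X$, hence meets $X\cap U_v$. Thus $X\cap U_v$ is dense in $V'$ (as subsets of $U_v$). But $X\cap U_v$ is closed in $U_v$, hence closed in $V'$, so density forces $V'\subseteq X\cap U_v$. This exhibits $V'$ as a nonempty open subset of $U_v$ contained in $X\cap U_v$, contradicting the fact that $X\cap U_v$ has empty interior in $U_v$. This contradiction gives the conclusion. The only slightly delicate point is choosing $v$ to lie in $X$ itself (rather than merely in $\barX$) so that a chart is available; this is handled by the open-meets-dense observation above.
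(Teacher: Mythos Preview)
Your proof is correct. Both your argument and the paper's hinge on the same local picture: an $(M,X,d)$-chart identifies $X$ locally with $\R^d\times\{0_c\}$, which for $c\ge1$ has empty interior in $\R^n$. The difference is in organization. The paper first invokes the general topological fact that for a locally closed set, nowhere dense is equivalent to having empty interior; this lets it work with $X$ itself rather than $\barX$. Assuming the interior $U$ of $X$ is nonempty, the paper picks $x\in U$, chooses a chart with domain inside $U$, and immediately gets the contradiction $\R^n=\phi_*(U)\subseteq\phi_*(X)=\R^d\times\{0_c\}\subsetneq\R^n$. Your route avoids citing that topological lemma and instead works directly with $\barX$: you must then pass from a point of $\barX$ to a point of $X$ (your ``open-meets-dense'' step) and use that $X\cap U_v$ is closed in $U_v$ to pull the dense-in-$V'$ conclusion up to $V'\subseteq X\cap U_v$. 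In effect you are rederiving, in the chart, the locally-closed shortcut the paper quotes. The paper's version is shorter; yours is self-contained.
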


\begin{proof}
Let $U$ denote the interior of $X$ in $M$.
For locally closed sets, nowhere dense is equivalent to empty interior,
so it suffices to prove that $U=\emptyset$.
Assume that $U\ne\emptyset$.
We aim for a contradiction.

As $U\ne\emptyset$, choose $x\in U$.
Then $x\in U\subseteq X\subseteq M$.
Then $U$ is an open neighborhood of $x$ in $M$.
Let $d:=\dim X$.
Then $X$ is a locally closed $d$-submanifold of $M$,
so, as $x\in X$, $X$ is $d$-dimensional in~$M$ near $x$.
Choose an $(M,X,d)$-chart $\phi$ near~$x$ s.t.~$\dom[\phi]\subseteq U$.
Let $n:=\dim M$.
Let $c:=n-d$.
Let $Y:=\R^d\times\{0_c\}$.
Since $d=\dim X<\dim M=n$, we get $Y\subsetneq\R^n$.
By definition of $(M,X,d)$-chart, $\phi_*(X)=Y$ and $\im[\phi]=\R^n$.
Since $\dom[\phi]\subseteq U$, we see that $\phi_*(U)=\im[\phi]$.
Then $\R^n=\im[\phi]=\phi_*(U)\subseteq\phi_*(X)=Y\subsetneq\R^n$,
contradiction.
\end{proof}

\begin{lem}\wrlab{lem-5-ndassemb-not-ddim}
Let $n,d\in\N$.
Let $V$ be an $(n,d)$-assembly.
Let $x\in V$.
Then $V$ is not $d$-dimensional in $\R^n$ near $x$.
\end{lem}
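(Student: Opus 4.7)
The plan is to argue by contradiction, ultimately invoking Baire category inside the $d$-plane $\R^d\times\{0_c\}$. First, if $d>n$, then no $(\R^n,V,d)$-chart can exist (the definition in the excerpt restricts $d$ to $\{0,\ldots,n\}$), so the lemma holds vacuously; hence I may assume $d\le n$ and set $c:=n-d$. Suppose for contradiction that $V$ is $d$-dimensional in $\R^n$ near $x$; fix an $(\R^n,V,d)$-chart $\phi$ with domain $W$, and set $P:=\R^d\times\{0_c\}$, so that $\phi:W\to\R^n$ is a diffeomorphism satisfying $\phi(V\cap W)=P$. Since $x\in V$, the assembly definition supplies pairwise-disjoint locally closed submanifolds $V_1,\ldots,V_k$ of $\R^n$ with $V=V_1\cup\cdots\cup V_k$ and $d_j:=\dim V_j<d$. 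Setting $Y_j:=\phi(V_j\cap W)$, each $Y_j$ is either empty or (as a diffeomorphic image) a locally closed $d_j$-submanifold of $\R^n$, each $Y_j\subseteq P$, and $Y_1\cup\cdots\cup Y_k=P$.

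The heart of the argument is to show that every nonempty $Y_j$ has empty interior in $P$. Suppose instead that some nonempty open subset $U$ of $P$ with $y\in U$ were contained in $Y_j$. Conjugating the flattening chart for $V_j$ by $\phi$ yields an open neighborhood $N$ of $y$ in $\R^n$ and a submersion $(g_1,\ldots,g_{n-d_j}):N\to\R^{n-d_j}$ whose zero locus is $Y_j\cap N$, with $d_yg_1,\ldots,d_yg_{n-d_j}$ linearly independent in the dual of $\R^n$; concretely, take the last $n-d_j$ coordinate functions of the flattening chart. Since $U\cap N\subseteq Y_j\cap N$, each $g_i$ vanishes on $U\cap N$, so each $d_yg_i$ annihilates $T_y(U\cap N)=T_yP$. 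Therefore $T_yP\subseteq\bigcap_i\ker d_yg_i=T_yY_j$, forcing $d=\dim T_yP\le d_j$, which contradicts $d_j<d$.

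To finish, observe that each $Y_j$ is locally closed in $\R^n$ and contained in the closed subset $P$, hence locally closed in $P$. The standard fact invoked at the start of the proof of \lref{lem-4-submflds-nwh-dense}---that a locally closed set with empty interior is nowhere dense---now shows each $Y_j$ is nowhere dense in $P$. But $P$ is homeomorphic to $\R^d$, hence a Baire space, and therefore cannot coincide with the finite union $Y_1\cup\cdots\cup Y_k$ of nowhere dense subsets: this is the desired contradiction.

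I expect no serious obstacle. The only mildly delicate step is extracting the local defining submersion $(g_1,\ldots,g_{n-d_j})$ for $Y_j$ at $y$ from the flattening chart, but this is a routine composition with the last $n-d_j$ coordinate projections; the tangent-space comparison and the Baire-category conclusion are then immediate.
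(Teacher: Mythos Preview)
Your argument is correct and follows essentially the same route as the paper: push the assembly pieces through the chart to write $P=\R^d\times\{0_c\}$ as a finite union of lower-dimensional locally closed submanifolds $Y_j$, show each is nowhere dense in $P$, and derive a contradiction. The only cosmetic differences are that the paper cites \lref{lem-4-submflds-nwh-dense} directly for ``$\dim Y_j<\dim P\Rightarrow Y_j$ nowhere dense'' (rather than your tangent-space computation), and it uses only the elementary fact that a finite union of nowhere-dense sets is nowhere dense---Baire category is not needed.
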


\begin{proof}
Assume, for a contradiction, $V$ is $d$-dimensional in $\R^n$ near $x$.

Choose an $(\R^n,V,d)$-chart $\phi$ near $x$.
Let $c:=n-d$, $Y:=\R^d\times\{0_c\}$.
Then $Y$ is a locally closed submanifold of $\R^n$
and $\dim Y=d$.
Let $D:=\dom[\phi]$.
By definition of $(\R^n,V,d)$-chart,
we see: ( $D$ is an open subset of $\R^n$ )
and ( $\phi_*(V)=Y$ ) and (~$\im[\phi]=\R^n$ ).
Since $x\in V$, $V\ne\emptyset$.
Then $V$ is a nonempty $(n,d)$-assembly,
so choose $k\in\N$ and choose pairwise-disjoint locally closed submanifolds
$V_1,\ldots,V_k$ of $\R^n$ such that both
$[\,V=V_1\cup\cdots\cup V_k\,]$ and $[\,\forall j\in\{1,\ldots,k\},\,\dim V_j<d\,]$.
Let $I:=\{1,\ldots,k\}$.
For all $j\in I$, let $d_j:=\dim V_j$; then $d_j<d$.

For all $j\in I$, let $D_j:=D\cap V_j$;
then, as $D$ is open in $\R^n$
and $V_j$ is a locally closed $d_j$-submanifold of $\R^n$,
we get: either ( $D_j=\emptyset$ )
or ( $D_j$~is a locally closed $d_j$-submanifold of $D$ ).
For all $j\in I$, let $Y_j:=\phi_*(V_j)$;
then we have
$Y_j=\phi_*(V_j)=\phi_*((\dom[\phi])\cap V_j)=\phi_*(D\cap V_j)=\phi_*(D_j)$.
Also, $Y=\phi_*(V)=\phi_*(V_1\cup\cdots\cup V_k)=Y_1\cup\cdots\cup Y_k$.
Choose $j\in I$ such that $Y_j$ is not nowhere dense in $Y$.
Then $\phi_*(D_j)=Y_j\ne\emptyset$, and it follows that $D_j\ne\emptyset$.
Then $D_j$~is a locally closed $d_j$-submanifold of~$D$.
So, since $\phi_*(D_j)=Y_j$ and since $\phi:D\to\R^n$ is a diffeomorphism,
we conclude that $Y_j$ is a locally closed $d_j$-submanifold of~$\R^n$.
So, since $Y_j\subseteq Y$, it follows that $Y_j$ is a locally closed $d_j$-submanifold of $Y$.
Then, as $\dim Y_j=d_j<d=\dim Y$, by~\lref{lem-4-submflds-nwh-dense} (with $M$~replaced by $Y$ and $X$~by~$Y_j$),
$Y_j$ is nowhere dense in $Y$, contradicting our choice of~$j$.
\end{proof}

\begin{lem}\wrlab{lem-6-sm-Rpt-imples-Zdense-Rpts}
Let $\bfx$ be an irreducible $\R$-variety.
Let $\bfs:=\Sm(\bfx)$ and let $S:=\bfs(\R)$.
Assume $S\ne\emptyset$.
Then $S$ is Zariski dense in $\bfx$.
\end{lem}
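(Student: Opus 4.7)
The plan is to argue by contradiction, playing the submanifold picture of $\bfs(\R)$ from \lref{lem-1-Rpts-irred-submfld} against the assembly picture of the $\R$-points of a lower-dimensional subvariety from \lref{lem-3-Rpts-variety-assembly}, with the dimension incompatibility supplied by \lref{lem-5-ndassemb-not-ddim}.

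The three invoked lemmas are stated in an affine ambient, so first I would reduce to the affine case: pick an affine open neighborhood $U \subseteq \bfx$ of a chosen point of $S$; since $\bfx$ is irreducible, $U$ is Zariski dense in $\bfx$, so Zariski density of $S \cap U$ in $U$ propagates to Zariski density of $S$ in $\bfx$. Relabeling, I assume $\bfx$ is Zariski closed in some $\bfa^n$ and set $d := \dim \bfx$. Then $\bfs = \Sm(\bfx)$ is smooth, Zariski locally closed in $\bfa^n$, and, being a nonempty Zariski open subvariety of the irreducible $\bfx$, is itself irreducible of dimension $d$. Applying \lref{lem-1-Rpts-irred-submfld} to $\bfs$, I obtain that $S$ is a locally closed $d$-submanifold of $\R^n$; in particular, $S$ is $d$-dimensional in $\R^n$ near each $x \in S$.

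Now suppose, for contradiction, that $S$ is not Zariski dense in $\bfx$, and let $\bfv$ be the Zariski closure of $S$ in $\bfs$. A short density check---using that $\bfs$ is Zariski dense in $\bfx$ and that every irreducible component of a proper Zariski closed subvariety of the irreducible $\bfs$ has dimension $<d$---yields $\bfv \subsetneq \bfs$. Then $\bfv$ is Zariski locally closed in $\bfa^n$ with every irreducible component of dimension $<d$, so $\bfv$ is an $(n,d)$-variety, and \lref{lem-3-Rpts-variety-assembly} makes $V := \bfv(\R)$ an $(n,d)$-assembly. Because $S \subseteq \bfv$ (from the definition of Zariski closure) and $S$ consists of $\R$-points, $S \subseteq V$; conversely $\bfv \subseteq \bfs$ forces $V = \bfv(\R) \subseteq \bfs(\R) = S$. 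Hence $V = S$.

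Fixing any $x \in S$, the previous paragraphs give that $S$ is $d$-dimensional in $\R^n$ near $x$, whereas \lref{lem-5-ndassemb-not-ddim} applied to the $(n,d)$-assembly $V = S$ gives that $S$ is not $d$-dimensional in $\R^n$ near $x$---the desired contradiction. The main bookkeeping I expect to require care is the opening affine reduction together with the equivalence between Zariski density of $S$ in $\bfx$ and the equality $\bfv = \bfs$; once that is in place, the three earlier lemmas interlock precisely to deliver the contradiction.
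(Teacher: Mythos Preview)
Your proof is correct and follows essentially the same route as the paper's: affine reduction, then a contradiction pitting the $d$-submanifold picture of $S$ from \lref{lem-1-Rpts-irred-submfld} against the $(n,d)$-assembly picture from \lref{lem-3-Rpts-variety-assembly} via \lref{lem-5-ndassemb-not-ddim}. Your choice to take the Zariski closure $\bfv$ of $S$ inside $\bfs$ rather than inside $\bfx$ is a mild streamlining---it yields $V=S$ outright, whereas the paper (closing in $\bfx$) must argue separately that $S$ is open in $V$ in order to transfer the $(\R^n,S,d)$-chart to an $(\R^n,V,d)$-chart before invoking \lref{lem-5-ndassemb-not-ddim}.
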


\begin{proof}
As $S\ne\emptyset$, choose $s_0\in S$.
Choose a Zariski open neighborhood~$\bfx_0$ of $s_0$ in $\bfx$
such that $\bfx_0$ is affine.
Since $\bfx_0$ is nonempty and Zariski open in~$\bfx$ and
since $\bfx$~is irreducible,
we get:
( $\bfx_0$ is Zariski dense in~$\bfx$ ) and
( $\bfx_0$ is irreducible ).
Let $X_0:=\bfx_0(\R)$.
Then $s_0\in X_0$.
Let $\bfs_0:=\Sm(\bfx_0)$, $S_0:=\bfs_0(\R)$.
As $\bfx_0$ is Zariski open in~$\bfx$,
we get $[\Sm(\bfx)]\cap\bfx_0=\Sm(\bfx_0)$,
{\it i.e.}, $\bfs\cap\bfx_0=\bfs_0$.
Then $s_0\in S\cap X_0=S_0$.
Since $S_0\subseteq S$, it suffices to show: $S_0$~is Zariski dense in $\bfx$.
So, since $\bfx_0$ is Zariski dense in~$\bfx$,
it suffices to~show: $S_0$ is Zariski dense in~$\bfx_0$.

As $\bfx_0$ is affine,
choose $n\in\N$ and a Zariski closed $\R$-subvariety $\bfx_1$ of $\bfa^n$
such that $\bfx_0$ is $\R$-isomorphic to $\bfx_1$.
Let $\bfalpha:\bfx_0\to\bfx_1$ be an~$\R$-isomorphism.
Since $\bfx_0$ is irreducible, we conclude that $\bfx_1$ is also irreducible.
Let $X_1:=\bfx_1(\R)$.
Let $\alpha:X_0\to X_1$ be the $\R$-points of $\bfalpha$.
Let $s_1:=\alpha(s_0)$, let $\bfs_1:=\bfalpha_*(\bfs_0)$ and let $S_1:=\bfs_1(\R)$.
Then $s_1=\alpha(s_0)\in\alpha_*(S_0)=S_1$.
Also, $S_1=\alpha_*(S_0)\subseteq\alpha_*(X_0)=X_1$.
Also, $\bfs_1=\bfalpha_*(\bfs_0)=\bfalpha_*(\Sm(\bfx_0))=\Sm(\bfalpha_*(\bfx_0))=\Sm(\bfx_1)$.
It suffices to~show that $S_1$ is Zariski dense in $\bfx_1$.
Let $\bfv$ denote the Zariski closure of $S_1$ in $\bfx_1$.
Assume, for a contradiction, that $\bfv\subsetneq\bfx_1$.

Let $V:=\bfv(\R)$.
Then $s_1\in S_1\subseteq V\subseteq X_1$.
Since $s_1\in V$, we get $V\ne\emptyset$, so $\bfv\ne\emptyset$.
Since $\bfv$ is Zariski closed in $\bfx_1$ and since $\bfx_1$~is Zariski closed in $\bfa^n$,
we conclude that $\bfv$ is Zariski closed in $\bfa^n$.
Let $d:=\dim\bfx_1$.
Then, since $\emptyset\ne\bfv\subsetneq\bfx_1$, since $\bfx_1$ is irreducible,
and since $\bfv$ and $\bfx_1$ are both Zariski closed in~$\bfa^n$,
it follows both that $d\ge1$ and that $\bfv$~is an $(n,d)$-variety.
Then, by \lref{lem-3-Rpts-variety-assembly},
we conclude that $V$~is an $(n,d)$-assembly.
Since $\bfs_1=\Sm(\bfx_1)$,
we see both that $\bfs_1$ is smooth and that $\bfs_1$ is Zariski open in $\bfx_1$.
Then $S_1$~is open in $X_1$.
So, as $S_1\subseteq V\subseteq X_1$,
$S_1$ is open in $V$.
Since $s_1\in S_1$, we get $S_1\ne\emptyset$, so $\bfs_1\ne\emptyset$.
Since $\bfs_1$ is nonempty and Zariski open in~$\bfx_1$ and since $\bfx_1$~is irreducible,
we see both that $\bfs_1$ is irreducible
and that $\dim\bfs_1=\dim\bfx_1$.
Also, since $\bfs_1$~is Zariski open in~$\bfx_1$
and since $\bfx_1$~is Zariski closed in $\bfa^n$,
we conclude that $\bfs_1$~is Zariski locally closed in~$\bfa^n$.
We have $\dim\bfs_1=\dim\bfx_1=d$.
Then, by~\lref{lem-1-Rpts-irred-submfld} (with $\bfs$~replaced by~$\bfs_1$),
we see that $S_1$ is a locally closed $d$-submanifold of~$\R^n$.
So, since $s_1\in S_1$, $S_1$~is $d$-dimensional in $\R^n$ near~$s_1$.
Since $S_1$~is open in~$V$ and $V$~has the relative topology inherited from~$\R^n$,
choose an~open subset $U$ of $\R^n$ such that $S_1=U\cap V$.
Since $U$ is open in~$\R^n$ and $s_1\in S_1=U\cap V\subseteq U$,
we get: $U$ is an open neighborhood of $s_1$ in~$\R^n$.
Choose an~$(\R^n,S_1,d)$-chart $\phi$ near~$s_1$ such that $\dom[\phi]\subseteq U$.
Define $D:=\dom[\phi]$.
Then $D=\dom[\phi]\subseteq U$, so $D\cap U=D$.
Then $D\cap S_1=D\cap U\cap V=D\cap V$.
Then we have
$$\phi_*(S_1)\quad=\quad\phi_*(D\cap S_1)\quad=\quad\phi_*(D\cap V)\quad=\quad\phi_*(V).$$
We conclude that $\phi$ is an~$(\R^n,V,d)$-chart near~$s_1$,
so $V$ is $d$-dimensional in~$\R^n$ near~$s_1$.
However, $V$ is an $(n,d)$-assembly, so, by~\lref{lem-5-ndassemb-not-ddim},
$V$~is {\it not} $d$-dimensional in $\R^n$ near $s_1$,
contradiction.
\end{proof}

\begin{cor}\wrlab{cor-7-one-smooth-Rpt-implies-Rpts-Zdense}
Let $\bfx$ be an irreducible $\R$-variety, $\bfs:=\Sm(\bfx)$.
Assume that $\bfs(\R)\ne\emptyset$.
Then $\widecheck{\bfx}=\bfx$, {\it i.e.}, $\bfx\in\widecheck{\scrv}$.
\end{cor}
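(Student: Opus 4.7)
The plan is to observe that this corollary is essentially a direct repackaging of \lref{lem-6-sm-Rpt-imples-Zdense-Rpts}. Set $S := \bfs(\R)$. The hypothesis gives $S \ne \emptyset$, so \lref{lem-6-sm-Rpt-imples-Zdense-Rpts} applies and yields that $S$ is Zariski dense in $\bfx$.

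Next I would note the containment $\bfs \subseteq \bfx$, which gives $S = \bfs(\R) \subseteq \bfx(\R) \subseteq \bfx$. The Zariski closure of $\bfx(\R)$ in $\bfx$ therefore contains the Zariski closure of $S$ in $\bfx$. By the previous paragraph, the latter closure is all of $\bfx$, so the former is too. That is precisely the statement $\widecheck{\bfx} = \bfx$, and, by definition of $\widecheck{\scrv}$, this gives $\bfx \in \widecheck{\scrv}$.

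There is no real obstacle here: all the substantive work has already been done in \lref{lem-6-sm-Rpt-imples-Zdense-Rpts} (via the assembly machinery in \lref{lem-1-Rpts-irred-submfld}--\lref{lem-5-ndassemb-not-ddim}). The only content of the corollary beyond that lemma is the trivial remark that the Zariski closure of $\bfx(\R)$ is at least as big as the Zariski closure of $\bfs(\R)$, because $\bfs \subseteq \bfx$ forces $\bfs(\R) \subseteq \bfx(\R)$.
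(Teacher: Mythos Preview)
Your proposal is correct and matches the paper's proof essentially verbatim: apply \lref{lem-6-sm-Rpt-imples-Zdense-Rpts} to get $S=\bfs(\R)$ Zariski dense in $\bfx$, then use $S\subseteq\bfx(\R)$ to conclude $\bfx(\R)$ is Zariski dense in $\bfx$, i.e., $\widecheck{\bfx}=\bfx$.
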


\begin{proof}
Let $S:=\bfs(\R)$ and let $X:=\bfx(\R)$.
Then, by \lref{lem-6-sm-Rpt-imples-Zdense-Rpts},
$S$ is Zariski dense in~$\bfx$.
So, since $S\subseteq X$,
we see that $X$ is Zariski dense in~$\bfx$.
So, as $\widecheck{\bfx}$ is the Zariski closure of $X$ in $\bfx$,
we get $\widecheck{\bfx}=\bfx$.
\end{proof}

\begin{ex}\wrlab{ex-non-smooth-does-not-inherit}
$\exists$an irreducible $\R$-variety $\bfw$
s.t.~$\emptyset\ne\widecheck{\bfw}\subsetneq\bfw$.
\end{ex}

\noindent
{\it Details:}
Let $\bfw$ be the closed irreducible $\R$-subvariety of $\bfa^3$
defined by~$\bfw:=\{(x,y,z)\in\bfa^3\,|\,x^2+y^2+z^2=0\}$.
Then $\bfw(\R)=\{(0,0,0)\}$, so 
$\widecheck{\bfw}=\{(x,y,z)\in\bfa^3\,|\,x=y=z=0\}$.
Then $\emptyset\ne\widecheck{\bfw}\subsetneq\bfw$.
\qed

\vskip.1in
The next two examples show that smoothness is not something
that is easy to determine when only looking at $\R$-points.

\begin{ex}\wrlab{ex-realan-nonsmooth-Rpt}
In an irreducible closed $\R$-subvariety $\bfy$ of $\bfa^2$,
even if $\bfy(\R)$ is a $C^\omega$-submanifold of $\R^2$,
and even if $\bfy(\R)$ is Zariski dense in $\bfy$,
it does not follow that $\bfy(\R)\subseteq\Sm(\bfy)$.
\end{ex}

\noindent
{\it Details:}
Let $\bfy$ be the irreducible Zariski closed $\R$-subvariety of $\bfa^2$
defined by~$\bfy:=\{(x,y)\in\bfa^2\,|\,y^3=x^3(1+x^2)\}$.
Then $\bfy(\R)$ is the graph of the $C^\omega$ function $x\mapsto x(1+x^2)^{1/3}:\R\to\R$,
and it follows that $\bfy(\R)$ is a $C^\omega$-submanifold of $\R^2$.
Also, $\bfy(\R)$~is infinite, so, since $\bfy$~is $1$-dimensional and irreducible,
we see that $\bfy(\R)$ is Zariski dense in $\bfy$.
Finally, $(0,0)\in\bfy(\R)$ and $(0,0)\notin\Sm(\bfy)$,
so $\bfy(\R)\not\subseteq\Sm(\bfy)$.
\qed

\begin{ex}\wrlab{ex-smooth-Rpts-nonsmooth-var}
In an irreducible closed $\R$-subvariety $\bfz$ of $\bfa^2$,
even if $\bfz(\R)\subseteq\Sm(\bfz)$,
and even if $\bfz(\R)$ is Zariski dense in $\bfz$,
it does not follow that $\bfz$ is smooth.
\end{ex}

\noindent
{\it Details:}
Let $\bfz$ be the irreducible Zariski closed $\R$-subvariety of $\bfa^2$
defined by~$\bfz:=\{(x,y)\in\bfa^2\,|\,y^3=(1+x^2)^2\}$.
Then $\bfz(\R)$~is infinite and $\bfz$ is $1$-dimensional and irreducible,
so $\bfz(\R)$ is Zariski dense in~$\bfz$.
Let $\bfs:=\Sm(\bfz)$.
Then $\bfz\backslash\bfs=\{(x,y)\in\bfa^2\,|\,x^2=-1,y=0\}$.
Then $(\bfz\backslash\bfs)(\R)=\emptyset$,
and we conclude that $\bfz(\R)\subseteq\bfs(\R)\subseteq\bfs=\Sm(\bfz)$.
Also, since $\bfz\backslash\bfs\ne\emptyset$,
we see that $\bfz$ is not smooth.
\qed

\section{Results about real algebraic actions\wrlab{sect-real-alg-results}}

For any $\R$-variety $\bfv$,
recall: $\bfv(\R)$ is the topological space of $\R$-points of $V$,
together with the Hausdorff topology induced by~the standard Hausdorff topology on $\R$.
For any smooth $\R$-variety $\bfs$, 
$\bfs_M(\R)$~will denote the manifold of $\R$-points of $\bfs$.
For any algebraic $\R$-group~$\bfg$,
$\bfg_L(\R)$ will denote the Lie group of $\R$-points of $\bfg$.

Let $G$ be a Lie group acting on a topological space $X$.
We will say that the action is {\bf real algebraic} if there exist
\begin{itemize}
\item an algebraic $\R$-group $\bfg$,
\item an $\R$-variety $\bfv$ \qquad\qquad and
\item an $\R$-action of $\bfg$ on $\bfv$
\end{itemize}
such that $G=\bfg_L(\R)$, such that $X=\bfv(\R)$
and such that the action of~$G$ on $X$ is equal to
the $\R$-points of the $\R$-action of $\bfg$ on $\bfv$.

Let $G$ be a Lie group acting on a manifold $M$.
We will say that the action is {\bf smoothly real algebraic} if
there exist
\begin{itemize}
\item an algebraic $\R$-group $\bfg$,
\item a smooth $\R$-variety $\bfs$ \qquad\qquad and
\item an $\R$-action of $\bfg$ on $\bfs$
\end{itemize}
such that $G=\bfg_L(\R)$, such that $M=\bfs_M(\R)$
and such that the action of $G$ on $M$ is equal to
the $\R$-points of the action of $\bfg$ on $\bfs$.
If the $G$-action on $M$ is smoothly real algebraic,
then it is $C^\omega$.

By Proposition 1.6, p.~8, of \cite{coste:realalgsets},
for any $\R$-variety $\bfv$,
the Hausdorff topological space $\bfv(\R)$ has only finitely many connected components.
Also, for any algebraic $\R$-group $\bfg$,
for~any open subgroup $G_\circ$ of $\bfg_L(\R)$,
the index of $G_\circ$ in $\bfg_L(\R)$ is finite.
Also, any stabilizer in a real algebraic action
has only finitely many connected components.

We make a small improvement to Lemma~6.1 in~\cite{adams:algacts}:

\begin{lem}\wrlab{lem-alg-cpt-stabs-to-proper}
Let a Lie group $G$ have a real algebraic action on a topological space $X$.
Let $G_\circ$~be an open subgroup of $G$.
Let $X_\circ$ be an open $G_\circ$-invariant subset of $X$.
Assume: for all $x\in X_\circ$, $\Stab_{G_\circ}(x)$ is compact.
Then there exists a dense open $G_\circ$-invariant subset $W$ of $X_\circ$
such that~the $G_\circ$-action on~$W$ is proper.
\end{lem}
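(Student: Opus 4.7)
The plan is to promote the ``nonempty open'' conclusion of Lemma~6.1 of~\cite{adams:algacts} to a ``dense open'' one by a Zorn's Lemma exhaustion, building $W$ as a maximal pairwise-disjoint union of pieces on each of which the action is already known to be proper.

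Concretely, I consider the collection $\mathcal F$ of all sets $\mathcal S \subseteq 2^{X_\circ}$ whose members are pairwise-disjoint open $G_\circ$-invariant subsets of $X_\circ$ on each of which the $G_\circ$-action is proper. Partially ordering $\mathcal F$ by inclusion, the empty family lies in $\mathcal F$, and for any chain the union remains in $\mathcal F$ (pairwise-disjointness follows by pushing any two members into a common $\mathcal S$ along the chain). Zorn's Lemma thus produces a maximal $\mathcal S_0 \in \mathcal F$, and I set $W := \bigcup \mathcal S_0$, which is automatically open and $G_\circ$-invariant.

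The main technical step I anticipate is verifying that the $G_\circ$-action on $W$ itself is proper. Here the pairwise-disjoint open $G_\circ$-invariant members of $\mathcal S_0$ are clopen in $W$, so the products $W_\alpha \times W_\beta$ partition $W \times W$ into clopen pieces, and by $G_\circ$-invariance the map $p : (g,x) \mapsto (gx, x)$ meets only the ``diagonal'' pieces $W_\alpha \times W_\alpha$. Any compact $K \subseteq W \times W$ intersects only finitely many of these pieces, and on each, $p^{-1}(K \cap (W_\alpha \times W_\alpha))$ coincides with the preimage, under the already-proper restricted map $G_\circ \times W_\alpha \to W_\alpha \times W_\alpha$, of a compact set, hence is compact. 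Assembling these shows $p^{-1}(K)$ is compact, so $p$ is proper.

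Density then follows by contradiction. If $W$ were not dense in $X_\circ$, then $X_\circ \setminus \overline{W}$ would be a nonempty open $G_\circ$-invariant subset of $X_\circ$ (continuity of the action makes $\overline{W}$ invariant), inheriting the compact-stabilizer hypothesis. Lemma~6.1 of~\cite{adams:algacts} would then furnish a nonempty open $G_\circ$-invariant $W' \subseteq X_\circ \setminus \overline{W}$ with proper $G_\circ$-action; since $W' \cap W \subseteq W' \cap \overline{W} = \emptyset$, the family $\mathcal S_0 \cup \{W'\}$ would strictly enlarge $\mathcal S_0$ within $\mathcal F$, contradicting maximality. Hence $W$ is dense, completing the argument.
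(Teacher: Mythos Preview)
Your Zorn's Lemma exhaustion by pairwise-disjoint proper pieces is exactly the paper's strategy, and your verification that the disjoint union inherits properness even fills in a step the paper simply asserts. The gap is in your density step, where you invoke Lemma~6.1 of~\cite{adams:algacts} for the $G_\circ$-action. That lemma is stated for a real algebraic action, which by definition requires the acting group to equal $\bfg_L(\R)$ for an algebraic $\R$-group~$\bfg$; an open subgroup $G_\circ$ need not be of this form (think of $\GL_n^+(\R)\subseteq\GL_n(\R)$), so you cannot feed it the $G_\circ$-action directly. Moreover the set $X_\circ\setminus\overline{W}$ on which you want to apply it is only $G_\circ$-invariant, not $G$-invariant, and you have verified compactness only of $G_\circ$-stabilizers there, not of $G$-stabilizers.

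The paper repairs this by running the whole Zorn argument with $G$ rather than $G_\circ$: the family consists of open \emph{$G$-invariant} subsets of the $G$-saturation $GX_\circ$ on which the \emph{$G$-action} is proper, with union $W_1$, and one sets $W:=W_1\cap X_\circ$ only at the end. The compact-stabilizer hypothesis on $G_\circ$ is upgraded to compactness of $\Stab_G(x)$ on all of $GX_\circ$ using that $G_\circ$ has finite index in $G$ (itself a consequence of $G=\bfg_L(\R)$). Lemma~6.1 is then legitimately applied to the $G$-action on~$X$, with $V_0$ the $G$-invariant open set $(GX_\circ)\setminus\overline{W_1}$, producing a $G$-invariant $U$ that contradicts maximality. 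Your argument works essentially verbatim once you make this shift from $G_\circ$ to $G$.
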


\begin{proof}
Let~$\scru$ denote the set of all open $G$-invariant subsets $Y$ of~$GX_\circ$
such that the $G$-action on~$Y$ is proper.
By convention, we have $\emptyset\in\scru$.
Let $\scru_*:=\{\scrw\subseteq\scru\,|\,\forall Y,Z\in\scrw,[(Y\cap Z=\emptyset)\hbox{ or }(Y=Z)]\}$
denote the set of all pairwise-disjoint subsets of $\scru$.
Let $\scru_*$ be partially ordered by~inclusion.
By Zorn's Lemma, choose a maximal element $\scrw_1\in\scru_*$.
For all~$\scrw\in\scru_*$, we have $\bigcup\scrw\in\scru$.
Let $W_1:=\bigcup\scrw_1$.
Then $W_1\in\scru$,
so $W_1$ is an open $G$-invariant subset of~$GX_\circ$,
and the $G$-action on~$W_1$ is proper.
Let $W:=W_1\cap X_\circ$.
Then $W$ is an open $G_\circ$-invariant subset of~$X_\circ$.
Since $G_\circ$ is an open subgroup of~$G$, $G_\circ$ is closed in $G$.
So, since the $G$-action on~$W_1$ is proper,
the $G_\circ$-action on $W_1$ is proper.
So, since $W\subseteq W_1$, the $G_\circ$-action on $W$ is proper.
It remains to show: $W$~is dense in~$X_\circ$.
Since $X_\circ$ is open in $X$, $X_\circ$ is open in $GX_\circ$.
So, as $W=W_1\cap X_\circ$,
it suffices to show that $W_1$ is dense in $GX_\circ$.
Let $\overline{W_1}$~denote the closure of~$W_1$ in~$GX_\circ$.
We wish to show: $\overline{W_1}=GX_\circ$.
Let $X_1:=(GX_\circ)\backslash\overline{W_1}$.
Assume that $X_1\ne\emptyset$.
We aim for a contradiction.

Since the index of $G_\circ$ in $G$ is finite, we see, for all $x\in X$, that
the index of~$\Stab_{G_\circ}(x)$ in $\Stab_G(x)$ is finite.
By assumption, for all~$x\in X_\circ$, $\Stab_{G_\circ}(x)$ is compact.
Therefore, for all $x\in X_\circ$, $\Stab_G(x)$ is compact.
Therefore, for all $x\in GX_\circ$, $\Stab_G(x)$ is compact.
So, as $X_1\subseteq GX_\circ$, we see, for all~$x\in X_1$, that $\Stab_G(x)$ is compact.
Since $X_\circ$ is open in~$X$,
it follows that $GX_\circ$ is open in $X$.
Since $\overline{W_1}$ is closed in $GX_\circ$,
we see that $X_1$ is open in $GX_\circ$.
So, since $GX_\circ$ is open in~$X$,
$X_1$ is open in~$X$.
By Lemma~6.1 in~\cite{adams:algacts}
(with $V$ replaced by~$X$ and $V_0$~by~$X_1$),
choose a~nonempty open $G$-invariant subset $U$ of $X_1$
such that the $G$-action on~$U$ is proper.
Because $U$~is open in $X_1$
and $X_1$ is open in $GX_\circ$,
we see that $U$~is open in~$GX_\circ$.
Then we have $U\in\scru$.
Also, we have $U\cap(\bigcup\scrw_1)=U\cap W_1\subseteq X_1\cap\overline{W_1}=((GX_0)\backslash\overline{W_1})\cap\overline{W_1}=\emptyset$.
Since $\scrw_1\in\scru_*$,
we see that $\scrw_1\subseteq\scru$ and that $\scrw_1$~is pairwise-disjoint.
Then $\scrw_1\cup\{U\}\subseteq\scru$
and $\scrw_1\cup\{U\}$ is pairwise-disjoint.
Then $\scrw_1\cup\{U\}\in\scru_*$,
contradicting maximality of~$\scrw_1$.
\end{proof}

\begin{thm}\wrlab{thm-mainalg}
Let a Lie group $G$ have a smoothly real algebraic action on a manifold $M$.
Let $G_\circ$~be an open subgroup of $G$.
Let $M_\circ$ be a~nonempty open $G_\circ$-invariant subset of $M$.
Assume that the $G_\circ$-action on $M_\circ$ is component effective.
Define
$$n\,\,:=\,\,\dim G\quad\qquad\hbox{and}\qquad\quad\ell\,\,:=\,\,\begin{cases}
n,&\hbox{if }n\le1;\\
n-1,&\hbox{if }n\ge2.\end{cases}$$
Then there exists a dense open $G_\circ$-invariant subset $W$ of~$\scrf^\ell M_\circ$
such~that the $G_\circ$-action on $W$ is free and proper.
\end{thm}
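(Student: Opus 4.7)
The plan is to promote the nonempty-open version of the freeness-and-properness result (item (4) in the introduction, from \cite{adams:algacts}) to the dense-open statement of the theorem, using \lref{lem-no-cpt-subgps-in-stabs} and \lref{lem-alg-cpt-stabs-to-proper} as the two key refinement tools. The edge case $n=0$ (in which $G$ is a finite Lie group, a zero-dimensional smooth $\R$-variety having only finitely many $\R$-points) can be handled directly, using that component-effectiveness is equivalent to fixpoint-rareness in the $C^\omega$ setting (\lref{lem-real-analytic-component-eff-iff-fixpt-rare}) and that any finite group action is automatically proper; so henceforth I assume $\ell\ge1$.

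First, I would observe that the prolonged $G$-action on $\scrf^\ell M$ is again smoothly real algebraic, since the frame bundle of a smooth $\R$-variety is again a smooth $\R$-variety and prolongation of an algebraic action is algebraic. Consequently, for every $x\in\scrf^\ell M_\circ\subseteq\scrf^\ell M$, $\Stab_G(x)$ is a real algebraic $\R$-group and hence has only finitely many connected components; since $G_\circ$ is an open subgroup of $G$, $\Stab_{G_\circ}(x)=G_\circ\cap\Stab_G(x)$ also has only finitely many connected components. In particular, any \emph{discrete} such stabilizer is automatically \emph{finite}, hence compact. Simultaneously, component-effectiveness of the $G_\circ$-action on $M_\circ$, combined with $\ell\ge1$ and \lref{lem-no-cpt-subgps-in-stabs} (applied with $G$ replaced by $G_\circ$ and $M$ by $M_\circ$), guarantees that every such $\Stab_{G_\circ}(x)$ has no nontrivial compact subgroup.

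Next, I would invoke the discrete-stabilizer theorem of \cite{adams:algacts} at the stated order $\ell$ to produce a dense open $G_\circ$-invariant subset $V\subseteq\scrf^\ell M_\circ$ on which every $G_\circ$-stabilizer is discrete. By the previous paragraph, each such stabilizer is then both finite and has no nontrivial compact subgroup, so each is trivial. Thus the $G_\circ$-action on $V$ is free.

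Finally, I would apply \lref{lem-alg-cpt-stabs-to-proper} to the real algebraic $G$-action on the topological space $X:=\scrf^\ell M$, taking $X_\circ:=V$ (which is an open $G_\circ$-invariant subset of $X$, because $\scrf^\ell M_\circ$ is open in $\scrf^\ell M$). Triviality of $G_\circ$-stabilizers on $V$ means they are compact, so the lemma yields a dense open $G_\circ$-invariant subset $W\subseteq V$ on which the $G_\circ$-action is proper. Density of $V$ in $\scrf^\ell M_\circ$ combined with density of $W$ in $V$ makes $W$ dense open in $\scrf^\ell M_\circ$, and $W\subseteq V$ preserves freeness. The main obstacle is verifying that the precise bound $\ell$ stated in the theorem really does suffice for the dense-open discreteness used above; this requires the full algebraic machinery of \cite{adams:algacts} rather than the weaker generic-freeness results of \cite{adams:genfreeacts} or \cite{olver:movfrmsing}, which only provide a nonempty open subset at an unspecified higher order.
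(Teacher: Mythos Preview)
Your overall strategy matches the paper's proof: handle $n=0$ directly; for $\ell\ge1$, produce a dense open subset of $\scrf^\ell M_\circ$ on which all $G_\circ$-stabilizers are discrete, upgrade discrete to finite via the algebraic structure (stabilizers have only finitely many components), then to trivial via \lref{lem-no-cpt-subgps-in-stabs} (or its corollary), and finally apply \lref{lem-alg-cpt-stabs-to-proper} to obtain properness on a further dense open subset.

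The one genuine misstep is in your final paragraph, where you have the references reversed. The dense-open local-freeness result at the stated order $\ell$ is precisely what Theorem~3.3 of \cite{adams:genfreeacts} supplies (for any $\ell\ge n-1$), and that is exactly what the paper invokes. Its hypothesis is that the identity-component action be fixpoint rare, which here follows from component effectiveness via \lref{lem-real-analytic-component-eff-iff-fixpt-rare}, since a smoothly real algebraic action is $C^\omega$. So \cite{adams:genfreeacts} is not the ``weaker'' result you describe; it already gives a dense open subset at the specific order $\ell$, not merely a nonempty open set at some unspecified order. The algebraic input from \cite{adams:algacts} is not needed for the discreteness step; algebraicity enters the argument only to guarantee that stabilizers have finitely many connected components (so that discrete implies finite) and to make \lref{lem-alg-cpt-stabs-to-proper} applicable. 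Once you swap in the correct citation and note the passage from component effective to fixpoint rare for $G^\circ$, your proof is the paper's proof.
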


\begin{proof}
{\it Claim:}
There is a dense open $G_\circ$-invariant subset $X_\circ$ of $\scrf^\ell M_\circ$
such that the $G_\circ$-action on $X_\circ$ is free.
{\it Proof of Claim:}
The $G$-action on $M$ is smoothly real algebraic,
and therefore $C^\omega$.
Then the $G_\circ$-action on~$M_\circ$ is also $C^\omega$.
Then, by \lref{lem-real-analytic-component-eff-iff-fixpt-rare}
(with $G$ replaced by $G_\circ$ and $M$ by $M_\circ$),
the $G_\circ$-action on $M_\circ$ is~fixpoint rare.

For all $g\in G_\circ\backslash\{1_G\}$, let $S_g:=\{q\in M_\circ\,|\,gq\ne q\}$;
then $S_g$ is dense open in $M_\circ$.
If $n=0$, then $G$ is finite and $\ell=0$, and,
identifying $\scrf^0M_\circ$ with $M_\circ$,
we may set $\displaystyle{X_\circ:=\bigcap\,\{S_g\,|\,g\in G_\circ\backslash\{1_G\}\}}$.
We assume $n\ge1$.

Let $\pi:\scrf^\ell M_\circ\to M_\circ$ be the structure map of $\scrf^\ell M_\circ$.
By definition of~$\ell$, $\ell\in\{n-1,n\}$, so $\ell\ge n-1$.
Let $G^\circ$ denote the identity component of~$G_\circ$.
The $G_\circ$-action on $M_\circ$ is~fixpoint rare and $G^\circ\subseteq G_\circ$,
and so the $G^\circ$-action on~$M_\circ$ is fixpoint rare.
Then, by Theorem 3.3 of \cite{adams:genfreeacts}
(with $G$~replaced by~$G_\circ$, $M$~by~$M_\circ$),
choose a dense open $G_\circ$-invariant subset~$M_1$ of $M_\circ$
such that the $G_\circ$-action on~$\pi^*(M_1)$ is locally free.
Let $X_\circ:=\pi^*(M_1)$.
Then the $G_\circ$-action on~$X_\circ$ is locally free.
Since $\pi:\scrf^\ell M_\circ\to M_\circ$ is open, continuous and $G_\circ$-equivariant
and since $M_1$~is a dense open $G_\circ$-invariant subset of~$M_\circ$,
we conclude that $X_\circ$ is a dense open $G_\circ$-invariant subset of~$\scrf^\ell M_\circ$.
It only remains to show that the $G_\circ$-action on $X_\circ$ is free.

As $n\ge1$, we get $\ell\ge1$, so $\ell\in\N$.
Then, by~\cref{cor-cpt-stabs-in-subset-prolong} (with $G$ replaced by $G_\circ$, $M$ by $M_\circ$ and $V$ by~$X_\circ$),
it suffices to show, for all $x\in X_\circ$, that $\Stab_{G_\circ}(x)$ is compact.
Let $x\in X_\circ$ be given, and let~$S_\circ:=\Stab_{G_\circ}(x)$.
We wish to show that $S_\circ$ is compact.

Since the $G_\circ$-action on~$X_\circ$ is locally free,
we see that $S_\circ$ is discrete.
Let $S:=\Stab_G(x)$.
Since the index of $G_\circ$ in $G$ is finite, we see that the index of~$S_\circ$ in $S$ is finite.
So, since $S_\circ$ is discrete, $S$~is discrete.
The $G$-action on $M$ is smoothly real algebraic,
so the~$G$-action on $\scrf^\ell M$ is also smoothly real algebraic,
so $S$ has only finitely many connected components.
So, since $S$ is discrete, $S$ is finite.
So, since $S_\circ\subseteq S$, we see that $S_\circ$ is finite, and so $S_\circ$ is compact.
{\it End of proof of claim.}

We identify $\scrf^\ell M_\circ$ with the preimage of $M_\circ$
under the structure map $\scrf^\ell M\to M$.
Then $\scrf^\ell M_\circ$ is an open subset of~$\scrf^\ell M$.
Choose $X_\circ$ as in~the claim.
Since $X_\circ$ is open in $\scrf^\ell M_\circ$
and $\scrf^\ell M_\circ$~is open in~$\scrf^\ell M$,
it follows that $X_\circ$~is open in $\scrf^\ell M$.
The $G_\circ$-action on $X_\circ$ is free, so, for all~$x\in X_\circ$,
we have $\Stab_{G_\circ}(x)=\{1_G\}$,
so $\Stab_{G_\circ}(x)$ is compact.
Then, by \lref{lem-alg-cpt-stabs-to-proper},
choose a dense open $G_\circ$-invariant subset $W$ of~$X_\circ$
such that the $G_\circ$-action on~$W$ is proper.
Because $W\subseteq X_\circ$ and because the $G_\circ$-action on~$X_\circ$ is free,
we see that the $G_\circ$-action on $W$ is also free.
It remains to show that $W$ is dense open in $\scrf^\ell M_\circ$.

Since $W$ is dense open in $X_\circ$,
and since, by the claim, $X_\circ$ is dense open in $\scrf^\ell M_\circ$,
it follows that $W$ is dense open in $\scrf^\ell M_\circ$, as desired.
\end{proof}

\begin{ex}\wrlab{ex-ell-is-sharp}
In \tref{thm-mainalg}, it is important that we define $\ell$
in~such a way that if $n=1$, then $\ell\ne0$.
\end{ex}

\noindent
{\it Details:}
Let~$G$~be the group of isometries (reflections and translations) of $\R$.
Let $G_\circ:=G$, $M:=\R$ and $M_\circ:=\R$.
The $G$-action on~$M$ is smoothly real algebraic, and
the $G_\circ$-action on~$M_\circ$ is component effective,
but every point of $M_\circ$ is fixed by a reflection in $G_\circ$.
Thus $G_\circ$~does not act freely on any
nonempty $G_\circ$-invariant subset of $\scrf^0M_\circ$.
\qed

\section{Main results\wrlab{sect-results}}

Let a Lie group $G$ have a $C^\infty$ action on a manifold $M$.
For all $k\in\N_0$, let $\pi_k:\scrf^kM\to M$ be the structure map of $\scrf^kM$.

\begin{thm}\wrlab{thm-freeAb-cent-stabs}
Assume that $G$ is connected and that the $G$-action on $M$ is fixpoint rare.
Let $n:=\dim G$.
Then there exists a dense open $G$-invariant subset $M_1$ of $M$
such that, for all $x\in\pi_n^*(M_1)$,
$\Stab_G(x)$~is~a~discrete, finitely-generated, free-Abelian subgroup of $Z(G)$.
\end{thm}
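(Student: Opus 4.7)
The plan is to establish four properties of $\Stab_G(x)$ in sequence: discreteness, being torsion-free, lying inside the center $Z(G)$, and being finitely generated free-Abelian. The first two follow from machinery already in place; centrality is the substantive step, proved via a jet calculation; and finite generation is a structural consequence.

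First, since $G$ is connected and the $G$-action on $M$ is fixpoint rare, Theorem~3.3 of~\cite{adams:genfreeacts}, applied at frame-bundle order $n-1$ (the low-dimensional cases $n \in \{0,1\}$ can be handled directly), produces a dense open $G$-invariant subset $M_1 \subseteq M$ such that the $G$-action on $\pi_{n-1}^*(M_1) \subseteq \scrf^{n-1} M$ is locally free. Equivalently, the jet-evaluation map $\Psi_p^{(n-1)}\colon \Lg \to J_p^{n-1}(TM)$ sending $\xi$ to the $(n-1)$-jet at $p$ of the infinitesimal generator $\tilde\xi$ is injective for every $p \in M_1$. Because a level-$n$ stabilizer is always contained in the corresponding level-$(n-1)$ stabilizer (fixing higher-order jet data is more restrictive), $\Stab_G(x)$ is discrete for every $x \in \pi_n^*(M_1)$. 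Next, since fixpoint rareness on a manifold implies component effectiveness (\secref{sect-global}), \lref{lem-no-cpt-subgps-in-stabs} (applied with $\ell = n \geq 1$) shows $\Stab_G(x)$ has no nontrivial compact subgroups, hence is torsion-free.

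For centrality, fix $g \in \Stab_G(x)$ and set $p := \pi_n(x)$. By definition of the $n$-th order frame bundle, $g \cdot x = x$ is equivalent to $j_p^n(g) = \id$ in the jet group; in local coordinates centered at $p$ this reads $g(y) - y = O(|y|^{n+1})$. A direct Taylor computation then shows that such a diffeomorphism preserves the $(n-1)$-jet at $p$ of \emph{every} smooth vector field $X$: one checks $(g_* X)(y) - X(y) = O(|y|^n)$, so $j_p^{n-1}(g_* X) = j_p^{n-1}(X)$. Applying this with $X = \tilde\xi$ and using the standard identity $g_* \tilde\xi = \widetilde{\Ad(g)\xi}$ for the $G$-action on vector fields gives $\Psi_p^{(n-1)}(\Ad(g)\xi) = \Psi_p^{(n-1)}(\xi)$ for every $\xi \in \Lg$. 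Injectivity of $\Psi_p^{(n-1)}$ forces $\Ad(g) = \id_{\Lg}$. Since $G$ is connected (hence generated by $\exp(\Lg)$), conjugation by $g$ fixes a neighborhood of $1_G$, so fixes all of $G$; that is, $g \in Z(G)$.

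Finally, $\Stab_G(x)$ is a discrete, torsion-free, abelian subgroup of $Z(G)$. Its identity component being trivial, the intersection $\Stab_G(x) \cap Z(G)^\circ$ is a discrete torsion-free subgroup of the connected abelian Lie group $Z(G)^\circ \cong \R^a \times \T^b$; such a subgroup must miss the compact $\T^b$-factor (the only source of torsion) and embed as a discrete subgroup of $\R^a$, giving $\Stab_G(x) \cap Z(G)^\circ \cong \Z^p$ for some $p \leq a$. To extend this to all of $\Stab_G(x)$ one invokes the structural fact that, for a connected Lie group $G$, the component group $Z(G)/Z(G)^\circ$ is finitely generated (a consequence of finite generation of $\pi_1(G)$ together with the Levi decomposition); the image of $\Stab_G(x)$ there is then a finitely generated torsion-free abelian group, and combining with the kernel $\Z^p$ yields $\Stab_G(x) \cong \Z^{p+q}$, as required. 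The main obstacle is the centrality step: one must carefully align the level-$n$ frame-bundle condition $j_p^n(g) = \id$ with the level-$(n-1)$ injectivity of $\Psi_p^{(n-1)}$ via the one-unit offset in the vector-field Taylor calculation, and this precise offset is exactly what makes the hypothesis on $x \in \pi_n^*(M_1)$ match up with the hypothesis that \cite{adams:genfreeacts} supplies at order $n-1$.
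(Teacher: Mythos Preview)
Your argument follows the same strategy as the paper's: invoke Theorem~3.3 of \cite{adams:genfreeacts} at level $n-1$ for local freeness (hence discreteness), use \lref{lem-no-cpt-subgps-in-stabs} for torsion-freeness, and prove centrality by combining the injectivity of the level-$(n-1)$ infinitesimal orbit map with the fact that a level-$n$ stabilizer acts trivially on level-$(n-1)$ tangent data. Your explicit Taylor computation $(g_*X)(y)-X(y)=O(|y|^n)$ is exactly what underlies the paper's cited Lemma~3.1 of \cite{adams:genfreeacts}, and your jet map $\Psi_p^{(n-1)}$ plays the role of the paper's $\iota:\Lg\to T_wW$ at the frame $w=\sigma(x)$; the paper simply packages these as black-box citations rather than unpacking them in coordinates.

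One small inaccuracy in your finite-generation step: the image of $\Stab_G(x)$ in $Z(G)/Z(G)^\circ$ need not be torsion-free (nothing prevents a generator of an infinite cyclic subgroup from having a power landing in $Z(G)^\circ$), so your decomposition $\Z^{p+q}$ is not quite justified as written. This does no damage, however, since once you know $\Stab_G(x)$ is finitely generated, torsion-free, and abelian, free-Abelianness follows immediately from the structure theorem without needing the extension to split as you describe. The paper sidesteps this by citing Lemma~3.1 of \cite{adolv:generic} for the finite generation of any discrete subgroup of $Z(G)$, and then concludes free-Abelianness directly.
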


\begin{proof}
If $n=0$, then, because $G$ is connected, it follows that $G$ is trivial,
and we may set $M_1:=M$.
We assume $n\ge1$.
Then $n\in\N$.

For all $k\in\N_0$, for all $x\in\scrf^kM$, let $G_x:=\Stab_G(x)$.

By Theorem 3.3 of \cite{adams:genfreeacts}
(with $\ell=n-1$ and $G^\circ=G$),
choose a~dense open $G$-invariant subset $M_1$ of $M$
s.t.~the $G$-action on $\pi_{n-1}^*(M_1)$ is locally free.
Let $x\in\pi_n^*(M_1)$ be given.
We wish to~prove that $G_x$ is a discrete, finitely-generated, free-Abelian subgroup of $Z(G)$.

Let $X:=\scrf^nM$, $W:=\scrf^{n-1}M$, $X_1:=\pi_n^*(M_1)$  and $W_1:=\pi_{n-1}^*(M_1)$.
Let $\sigma:X\to W$ be the canonical map.
Let~$w:=\sigma(x)$.
Then, as~$\sigma$~is $G$-equivariant, we get $G_x\subseteq G_w$.
Since $\pi_{n-1}\circ\sigma=\pi_n$, we get $\pi_{n-1}(w)=\pi_n(x)$.
Then $\pi_{n-1}(w)=\pi_n(x)\in\pi_n(\pi_n^*(M_1))\subseteq M_1$,
and so $w\in\pi_{n-1}^*(M_1)$.
Therefore, since the $G$-action on~$\pi_{n-1}^*(M_1)$ is locally free,
we conclude that $G_w$~is a discrete subgroup of~$G$.

{\it Claim:} $G_x\subseteq Z(G)$.
{\it Proof of claim:}
Let $a\in G_x$ be given.
We wish to show that $a\in Z(G)$.
Let $\Lg:=T_{1_G}G$.
Since $G$ is connected,
$Z(G)$~is the kernel of the Adjoint representation $\Ad:G\to\GL(\Lg)$.
It therefore suffices to show that $\Ad\,a:\Lg\to\Lg$ is equal to the identity map $\Lg\to\Lg$.
Let $B\in\Lg$ be given.
We wish to show that $(\Ad\,a)B=B$.

Let $\iota:\Lg\to T_wW$ be the differential, at $1_G$, of $g\mapsto gw:G\to W$.
Since $G_w$ is discrete, by Corollary 5.2 of \cite{adams:locfreeacts}, $\iota:\Lg\to T_wW$ is injective.
Let $v:=\iota(B)$.
Then $v\in T_wW$.
By Lemma 3.1 of \cite{adams:genfreeacts}
(with $k$ replaced by~$n$),
$\Stab_G(x)\subseteq\Stab_G(v)$.
Then $a\in G_x=\Stab_G(x)\subseteq\Stab_G(v)$,
and so $av=v$.
Let $G$ act on $\Lg$ via the Adjoint representation.
Then the map $\iota:\Lg\to T_wW$ is $G_w$-equivariant.
So, since $a\in G_x\subseteq G_w$ and $\iota(B)=v$,
we get $\iota((\Ad\,a)B)=av$.
Then $\iota((\Ad\,a)B)=av=v=\iota(B)$,
so, by~injectivity of $\iota:\Lg\to T_wW$,
$(\Ad\,a)B=B$.
{\it End of proof of claim.}

Since $G_x\subseteq G_w$ and since $G_w$ is discrete,
we conclude that $G_x$ is also discrete.
So, by the claim, $G_x$~is a discrete subgroup of~$Z(G)$.
So, by~Lemma~3.1 in \cite{adolv:generic},
$G_x$ is finitely-generated.
It~remains only to~show that $G_x$ is free-Abelian.
As $G_x$ is finitely-generated and Abelian,
it suffices to show that $G_x$ is torsion-free.

The $G$-action on $M$ is fixpoint rare, and, therefore, component effective.
Also, $x\in\pi_n^*(M_1)\subseteq\scrf^nM$.
So, since $n\in\N$ and since $G_x=\Stab_G(x)$,
by~\lref{lem-no-cpt-subgps-in-stabs} (with $\ell$ replaced by $n$),
we see that $G_x$~has no nontrivial compact subgroups.
Then $G_x$ is torsion-free.
\end{proof}

\begin{cor}\wrlab{cor-cpt-center}
Assume that $G$ is connected, that $Z(G)$ is compact and that the $G$-action on $M$ is fixpoint rare.
Let $n:=\dim G$.
Then there exists
a dense open $G$-invariant subset $M_1$ of $M$
such that the $G$-action on~$\pi_n^*(M_1)$ is free.
\end{cor}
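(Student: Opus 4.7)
The plan is to deduce this essentially immediately from \tref{thm-freeAb-cent-stabs}. First I would apply that theorem directly, using the hypotheses that $G$ is connected and the $G$-action on $M$ is fixpoint rare, to obtain a dense open $G$-invariant subset $M_1$ of $M$ such that, for every $x \in \pi_n^*(M_1)$, the stabilizer $\Stab_G(x)$ is a discrete, finitely-generated, free-Abelian subgroup of $Z(G)$.

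Next I would fix an arbitrary $x \in \pi_n^*(M_1)$ and show $\Stab_G(x) = \{1_G\}$. The key observation is the two-step collapse: since $\Stab_G(x)$ is a discrete subgroup of $Z(G)$, and $Z(G)$ is compact by hypothesis, $\Stab_G(x)$ is a discrete subset of a compact space and therefore finite. But \tref{thm-freeAb-cent-stabs} also tells us that $\Stab_G(x)$ is free-Abelian, and the only finite free-Abelian group is the trivial group (since $\Z^r$ is finite iff $r=0$). Hence $\Stab_G(x) = \{1_G\}$, which is exactly the assertion that the $G$-action on $\pi_n^*(M_1)$ is free.

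I do not expect any real obstacle; the corollary is a clean specialization of the main theorem. The only minor point worth mentioning is the degenerate case $n = 0$, but then connectedness of $G$ forces $G$ to be trivial, the action is automatically free on all of $\scrf^0 M = M$, and we may simply take $M_1 := M$.
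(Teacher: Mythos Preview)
Your proposal is correct and matches the paper's proof essentially line for line: apply \tref{thm-freeAb-cent-stabs}, then use compactness of $Z(G)$ to force the discrete stabilizer to be finite, and conclude triviality from free-Abelianness. The separate treatment of $n=0$ is harmless but unnecessary, since \tref{thm-freeAb-cent-stabs} already handles that case internally.
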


\begin{proof}
Choose $M_1$ as in \tref{thm-freeAb-cent-stabs}.
Let $x\in\pi_n^*(M_1)$ be given.
We wish to show that $\Stab_G(x)=\{1_G\}$.

By \tref{thm-freeAb-cent-stabs},
we know that $\Stab_G(x)$ is a discrete, free-Abelian subgroup of~$Z(G)$.
Since $Z(G)$ is compact,
any discrete subgroup of~$Z(G)$ is finite, and so $\Stab_G(x)$~is finite.
A finite, free-Abelian group is trivial, and so $\Stab_G(x)=\{1_G\}$, as desired.
\end{proof}


\bibliography{list}

\end{document}